\documentclass[11pt]{article}
\usepackage{amsmath,amsthm,amssymb,graphicx}
\usepackage[margin=1in]{geometry}
\usepackage{amsfonts}
\usepackage{hyperref}
\usepackage{tikz}
\usepackage{booktabs}
\usepackage[all]{xy}
\usepackage{braket}
\newtheorem{lemma}{Lemma}

\newtheorem{theorem}{Theorem}
\newtheorem{definition}{Definition}
\newtheorem{corollary}{Corollary}

\newtheorem{proposition}{Proposition}
\newtheorem{remark}{Remark}

\usepackage{algorithm}
\usepackage{algorithmic}
\theoremstyle{plain}

\usepackage{subcaption}
\usepackage{placeins} 

\title{The Antisymmetric Line Graph}

\author{Hartosh Singh Bal}
\date{}

\begin{document}

\maketitle
\renewcommand{\thefootnote}{}
\footnotetext{2020 \textit{Mathematics Subject Classification}. Primary 05C22; Secondary 05C50, 05C75, 05C85.}
\renewcommand{\thefootnote}{\arabic{footnote}}

\begin{abstract}
Let $G$ be a finite simple graph with oriented incidence matrix $D$. The signed
graph on edge set $E(G)$ with adjacency matrix
\[
A_{\mathcal A(G)}=D^{\mathsf T}D-2I
\]
is classical in the signed-line-graph literature. In this paper we study its
canonical switching class as a source of invariants of the underlying unsigned
graph.

We prove that the switching class of $\mathcal A(G)$ determines $G$ up to
isomorphism modulo isolated vertices, and we relate the frustration index
$\ell(\mathcal A(G))$ to classical bipartization parameters. In particular, we
show
\[
\operatorname{def}(G)\le \ell(\mathcal A(G))\le (\Delta(G)-1)\operatorname{def}(G),
\]
and, for cubic graphs,
\[
\ell(\mathcal A(G))=2\,\operatorname{oct}(G).
\]

We then prove the exact optimization identity
\[
\ell(\mathcal A(G))
=
\frac14\sum_{v\in V(G)} d(v)^2
-\frac14\max_{x\in\{\pm1\}^{E(G)}}\|Dx\|^2,
\]
so $\ell(\mathcal A(G))$ is exactly a Boolean edge-space Laplacian optimization
problem. This yields a spectral lower bound in terms of the largest Laplacian
eigenvalue, a cubic spectral lower bound on odd cycle transversal, and explicit
family-level comparisons showing that the spectral and defect bounds govern
different regimes: on odd cycles the spectral bound is asymptotically vacuous,
while on complete multipartite graphs it already captures exactly $3/4$ of the
true value of $\ell(\mathcal A(G))$.

Thus the paper uses a classical signed line graph in a new way: as a
source of combinatorial invariants of ordinary graphs, especially through
frustration and odd-cycle-transversal phenomena.
\end{abstract}
\medskip
\noindent\textbf{Keywords.}
signed line graph; frustration index; odd cycle transversal;
Laplacian spectrum; cubic graph; complete multipartite graph.
\section{Introduction}

In this paper we study the \emph{antisymmetric line graph} (ALG), the signed graph
on edge set $E(G)$ with adjacency matrix
\[
A_{\mathcal A(G)}=D^{\mathsf T}D-2I,
\]
where $D$ is an oriented incidence matrix of $G$. This object is classical in
the signed-line-graph literature: up to the prevailing sign convention it is the
signed or spectral line graph of an oriented graph
\cite{GerminaHameedZaslavsky2011,Zaslavsky2010Matrices,BelardoPisanskiStanicZaslavsky2023}.
Its switching-invariance under edge reorientation, its cycle-parity behavior,
and its balance-theoretic connection with bipartiteness are also part of that
literature. We retain the name \emph{antisymmetric line graph} because this
signed graph arises naturally as the antisymmetric sector of the doubled lift
$\mathrm{HL}'_2(G)$ in \cite{BalHL}.

The purpose of the present paper is different. Rather than treating
$\mathcal A(G)$ as a signed-graph construction in its own right, we use its
canonical switching class as a source of invariants of the underlying unsigned
graph $G$. In this direction, the main new results concern frustration,
odd-cycle-transversal parameters, and spectral lower bounds derived from the same
incidence formula.

Conceptually, ALG sits between the base graph and the ordinary line graph.
Forgetting signs recovers the line graph $L(G)$, but the signed structure
remembers orientation-sensitive local incidence information. This extra
structure is enough, for example, to distinguish the Whitney pair
$\{K_3,K_{1,3}\}$ and hence recover the base graph from the switching class up to
isolated vertices. At the same time, the identity
\[
A_{\mathcal A(G)}+2I=D^{\mathsf T}D
\]
places the Laplacian directly on edge space and explains why the frustration
index of $\mathcal A(G)$ admits an exact Boolean-Laplacian formulation.

The matrix
\[
D^{\mathsf T}D-2I
\]
also has a classical spectral history: it is the Gram matrix arising from the
root-system representation of line graphs, and its least-eigenvalue bound
$\ge -2$ goes back to Cameron, Goethals, Seidel, and Shult \cite{CGSS}. What is
specific here is not the matrix alone, but the unsigned-graph information that
can be extracted from the canonical signed switching class it defines.

The antisymmetric line graph also arises naturally from symmetric lift theory:
in \cite{BalHL} the doubled lift $\mathrm{HL}'_2(G)$ was shown to split into
symmetric and antisymmetric sectors, the former governed by $L(G)$ and the latter
by $\mathcal A(G)$. The present paper isolates that antisymmetric sector and
studies its intrinsic graph-theoretic consequences.

\paragraph{Main results.}
Our strongest results are frustration-theoretic. We prove an orientation formula
for $\ell(\mathcal A(G))$, compare it with Max-Cut defect, and establish the
exact cubic identity
\[
\ell(\mathcal A(G))=2\,\mathrm{oct}(G).
\]
We then show that
\[
\ell(\mathcal A(G))
=
\frac14\sum_{v\in V(G)} d(v)^2
-
\frac14\max_{x\in\{\pm1\}^{E(G)}} \|Dx\|^2,
\]
so the frustration index is exactly a Boolean optimization of the edge-space
Laplacian quadratic form. This yields a spectral lower bound in terms of the
largest Laplacian eigenvalue and, in the cubic case, a spectral lower bound on
odd cycle transversal. We also evaluate $\ell(\mathcal A(G))$ exactly on complete
multipartite graphs, where the single-eigenvalue spectral lower bound is already
$3/4$-sharp.

Along the way we record several structural properties of the canonical switching
class of $\mathcal A(G)$, including reconstruction up to isolated vertices,
cycle-parity constraints on induced signed cycles, and the compact edge-space
Laplacian geometry carried by the same matrix identity.

\medskip
\noindent\textbf{What is new in this paper.}
The main new contributions are:
\begin{enumerate}\itemsep2pt
\item[(i)] the use of the canonical signed line graph $\mathcal A(G)$ as a source
of invariants of the underlying unsigned graph, rather than as an object of
signed-spectral theory in its own right;
\item[(ii)] the reconstruction theorem from the switching class of $\mathcal A(G)$,
excluding Whitney's exceptional ambiguity;
\item[(iii)] the orientation characterization of $\ell(\mathcal A(G))$, the
comparison with Max-Cut defect, and in particular the cubic exactness theorem
\[
\ell(\mathcal A(G))=2\,\mathrm{oct}(G);
\]
\item[(iv)] the exact Boolean-Laplacian reformulation of $\ell(\mathcal A(G))$,
together with the resulting spectral lower bound in terms of the largest
ordinary Laplacian eigenvalue and the cubic spectral lower bound for odd cycle
transversal;
\item[(v)] explicit model families showing that the defect and spectral lower
bounds govern genuinely different regimes, including exact evaluation on complete
multipartite graphs where the spectral lower bound captures exactly $3/4$ of the
true value;
\item[(vi)] small-scale computational evidence illustrating both separating power
and limitations of low-order and spectral signed invariants of $\mathcal A(G)$.
\end{enumerate}

The paper is organized as follows.
Section~\ref{sec:preliminaries} reviews line graphs, signed graphs, and
incidence matrices.
Section~\ref{sec:construction} gives the intrinsic construction of
$\mathcal A(G)$ and situates it relative to the signed-line-graph literature.
Section~\ref{sec:laplacian-geometry} records the compact edge-space Laplacian
interpretation of the same operator.
Section~\ref{sec:signed-class} develops the intrinsic signed-graph theory.
Section~\ref{sec:triangle} studies the signed triangle imbalance.
Section~\ref{sec:empirical} gives brief computational illustrations.
Section~\ref{sec:frustration-maxcut} contains the frustration, Max-Cut, and
cubic exactness results.
Section~\ref{sec:laplacian-optimization} reformulates $\ell(\mathcal A(G))$ as
a Boolean Laplacian optimization problem and derives the spectral bounds.
Section~\ref{sec:outlook} concludes.

\section{Preliminaries}
\label{sec:preliminaries}

In this section we fix notation and recall standard definitions concerning line
graphs and signed graphs.
All graphs in this paper are finite, simple, and undirected unless stated
otherwise.

\subsection{Graphs and Line Graphs}

Let $G=(V,E)$ be a graph.
The \emph{line graph} $L(G)$ is the graph whose vertex set is $E(G)$, with two
vertices adjacent if and only if the corresponding edges of $G$ share a common
endpoint.
We write $A_{L(G)}$ for the adjacency matrix of $L(G)$.

The line graph construction is functorial with respect to graph isomorphisms, but
it is not injective.
Whitney’s classical theorem characterizes the failure of injectivity.

\begin{theorem}[Whitney {\cite{Whitney1932}}]
\label{thm:whitney}
Let $G$ and $H$ be connected graphs.
If $L(G)\cong L(H)$, then either $G\cong H$, or $\{G,H\}=\{K_3,K_{1,3}\}$.
\end{theorem}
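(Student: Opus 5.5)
The statement is Whitney's classical theorem, which the paper cites rather than reproves. If I were to prove it, I would follow the standard edge-isomorphism argument. Let $\varphi\colon E(G)\to E(H)$ be the bijection induced by $L(G)\cong L(H)$, so two edges share an endpoint exactly when their images do. The goal is to show that, outside a finite list of small exceptions, $\varphi$ maps every star $S_v=\{e\in E(G): v\in e\}$ onto a star of $H$. Setting $\psi(v)=w$ whenever $\varphi(S_v)=S_w$ then gives a vertex bijection (on non-isolated vertices, which is all of $G$ by connectedness once $G$ has an edge) inducing $\varphi$, so $G\cong H$.

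\textbf{Step 1: classify triangles of the line graph.} Every clique of $L(G)$ comes either from a star of $G$ or from a triangle of $G$. The two types are distinguished by counting, for an outside edge $f$, how many edges of the triangle $T$ it meets.
\begin{itemize}
\item If $T$ comes from a triangle $xyz$ of $G$, any other edge touches at most one of $x,y,z$. It therefore meets $0$ or $2$ edges of $T$.
\item If $T=\{vx,vy,vz\}$ comes from a star, an edge at $x$ not through $v$ and not going to $y$ or $z$ meets exactly one edge of $T$.
\end{itemize}
Call $T$ \emph{odd} if some edge meets exactly one of its edges. Oddness is intrinsic to $L(G)$, so it is preserved by $\varphi$, and odd triangles are exactly star-triangles except when the leaves $x,y,z$ have no further neighbours beyond $v$ and each other. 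By connectedness, that exception forces $G$ to live on the four vertices $\{v,x,y,z\}$.

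\textbf{Step 2: cliques of size $\ge 4$ and degree-$\ge 3$ stars.} Any clique of size $\ge4$ in $L(G)$ is contained in a star, since a triangle of $G$ has only three edges. So for a vertex of degree $\ge4$, $S_v$ is a maximal clique of size $\ge4$. Its image is a clique of size $\ge4$ in $L(H)$, hence lies inside a star of $H$. Applying the same argument to $\varphi^{-1}$ shows the image is a full star. Degree-$3$ stars are even triangles only in the small exceptional configurations, so away from these, Step 1 shows they map to stars.

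\textbf{Step 3: degree-$2$ stars, and the main obstacle.} The hardest step is handling vertices of degree $2$ together with the small cases, since a single adjacent pair of edges carries no clique information. For a degree-$2$ vertex $v$ with $S_v=\{e,f\}$, I would argue that $\varphi(e)$ and $\varphi(f)$ share an endpoint $w$. I would then use the already-determined images of the stars at the other endpoints of $e$ and $f$ (or a degree argument along a path) to show that $w$ has no further edges, so $\varphi(S_v)=S_w$.

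Degree-$1$ vertices are forced once their unique neighbour's star is fixed. Finally, I would check all connected graphs on at most four vertices directly. This finite check is where $K_3$ versus $K_{1,3}$ appears as the unique failure: both have $L\cong K_3$. It is also where one verifies that $K_4$, $K_4-e$, and $K_{1,3}+e$ are determined by their line graphs, which are distinguished by their edge counts and degree sequences.
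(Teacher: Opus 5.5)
\textbf{Overall.} The paper gives no proof of this statement; Whitney's theorem is only quoted. The benchmark is therefore the standard textbook argument, and your architecture is exactly that argument: odd versus even triangles of $L(G)$, cliques of size $\ge 4$ lying in stars, stars sent onto stars, the induced vertex bijection, and a finite check.

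\textbf{The gap: Step~3.} You flag Step~3 as the main obstacle but do not carry it out. ``Use the already-determined images of the stars at the other endpoints'' is not yet an argument, and those images need not be available: the neighbours may also have degree $2$, and for paths and cycles Steps~1--2 fix no star at all. What is missing is a one-neighbour propagation lemma.

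Let $d(u)=2$, $S_u=\{e,f\}$ with $e=ux$, and suppose $\varphi(S_x)=S_{x'}$ is already known. Then $\varphi(e)=x'w$ for some $w\ne x'$. Since $f\notin S_x$, also $\varphi(f)\notin S_{x'}$, so $\varphi(f)$ meets $\varphi(e)$ at $w$. Suppose $g\notin\{\varphi(e),\varphi(f)\}$ were a further edge at $w$. Then $\varphi^{-1}(g)$ meets $e$ but lies outside $S_u$, so it meets $e$ at $x$. Hence $g\in S_{x'}$, i.e.\ $g=x'w=\varphi(e)$, a contradiction. Thus $\varphi(S_u)=S_w$, and the same argument with $S_u=\{e\}$ handles leaves.

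For $|V(G)|\ge 5$, induct on the distance to the set of vertices of degree $\ge 3$, whose stars Steps~1--2 fix; this sends every star of $G$ onto a star of $H$. If instead $\Delta(G)\le 2$ and $G\ne K_3$, then $L(G)$, hence $L(H)$, is triangle-free, so $\Delta(H)\le 2$. Thus $H$ is a path or cycle with $|E(H)|=|E(G)|$, and whether $L(G)$ is a path or a cycle decides which.

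\textbf{Smaller points.}

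\emph{Step~1.} With ``odd'' meaning that some edge meets \emph{exactly one} edge of $T$, your claim that a non-odd star-triangle forces $G$ onto $\{v,x,y,z\}$ is false when $d(v)\ge 4$. In $K_{1,4}$ the triangle $\{vx,vy,vz\}$ is not odd in your sense: connectivity runs through $v$, and the fourth edge at $v$ meets all three edges of $T$. This is harmless, since you only apply Step~1 to full stars of degree exactly $3$, where it is correct. Still, either restrict the claim or use the classical definition (some edge meets an odd number, $1$ or $3$, of the edges of $T$).

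\emph{Fullness for degree $3$.} For $d(v)=3$, Step~1 only yields $\varphi(S_v)\subseteq S_w$. Fullness needs Step~2 applied to $\varphi^{-1}$: if $d(w)\ge 4$, then $\varphi^{-1}(S_w)$ would be a full star of $G$ of size $\ge 4$ containing $S_v$, which is impossible.

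\emph{Finite check.} Your check only compares the small graphs with one another. That suffices precisely because Steps~1--2 and the propagation lemma use $|V(G)|\ge 5$ but nothing about $H$. By symmetry, only pairs with both graphs on at most four vertices remain.

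You could instead treat degree-$2$ vertices by the symmetric classical argument: a third edge $g$ at $w$ gives a star-triangle of $L(H)$, odd when $|V(H)|\ge 5$, whose preimage $\{e,f,\varphi^{-1}(g)\}$ would then be a star at $v$. That route needs $|V(H)|\ge 5$ as well. The finite check would then also have to exclude a graph on five or more vertices sharing its line graph with a small one.
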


Since line graphs are defined on edges, they do not detect isolated vertices.
Accordingly, reconstruction results involving $L(G)$ are always understood up to
the addition or removal of isolated vertices.

\subsection{Signed Graphs}

A \emph{signed graph} is a pair $(\Gamma,\sigma)$, where $\Gamma=(V,E)$ is a graph
and $\sigma:E\to\{+1,-1\}$ assigns a sign to each edge.
Equivalently, a signed graph may be specified by a symmetric matrix
$A=(a_{ij})_{i,j\in V}$ with $a_{ij}\in\{0,+1,-1\}$, where $|a_{ij}|$ records
adjacency and $\operatorname{sign}(a_{ij})$ records the edge sign.

A fundamental notion in signed graph theory is \emph{switching}.
Given a signed graph $(\Gamma,\sigma)$ and a vertex $v\in V$, switching at $v$
means reversing the signs of all edges incident to $v$.

\begin{definition}[Switching class]
Two signed graphs $\Sigma_1$ and $\Sigma_2$ on the same underlying unsigned graph are
\emph{switching equivalent} if $\Sigma_2$ can be obtained from $\Sigma_1$ by a sequence of switchings.
The corresponding equivalence class is called the \emph{switching class} of $\Sigma$.
\end{definition}

Switching equivalence preserves all intrinsic signed-graph properties, including
cycle balance, frustration index, and signed spectra.
Accordingly, signed graphs are typically studied up to switching equivalence.

\subsection{Cycles and Balance}

Let $(\Gamma,\sigma)$ be a signed graph.
For a cycle $C\subseteq \Gamma$, the \emph{sign of $C$} is defined as the product
of the signs of its edges.
A cycle is called \emph{positive} if this product is $+1$ and \emph{negative} if
it is $-1$.
A signed graph is said to be \emph{balanced} if all of its cycles are positive.

In the present work, we will be particularly interested in signed triangles in
the antisymmetric line graph.
The imbalance between positive and negative triangles will give rise to a
canonical invariant of ordinary graphs, introduced in later sections.

\subsection{Incidence Matrices}

Let $G=(V,E)$ be a graph.
Fix an arbitrary orientation of each edge.
The (oriented) incidence matrix $D$ of $G$ is the $|V|\times|E|$ matrix defined by
\[
D_{ve} =
\begin{cases}
+1 & \text{if $v$ is the head of $e$,}\\
-1 & \text{if $v$ is the tail of $e$,}\\
0  & \text{otherwise.}
\end{cases}
\]

The matrix $D^{\mathsf T}D$ encodes adjacency relations among edges together with their
relative orientations.
Changing the orientation of a single edge corresponds to multiplying the
corresponding column of $D$ by $-1$.
As we shall see, this operation induces a switching transformation at the
corresponding vertex of the line graph.

This observation underlies the canonical nature of the antisymmetric line graph
construction developed in Section~\ref{sec:construction}.

\section{Construction of the Antisymmetric Line Graph}
\label{sec:construction}

The signed graph studied here can be obtained abstractly as the antisymmetric
sector of the symmetric lift $\mathrm{HL}'_2(G)$ introduced in \cite{BalHL}.
From a different direction, however, the same signed graph is classical in the
signed-line-graph literature: for an oriented graph with incidence matrix $D$,
the matrix
\[
D^{\mathsf T}D-2I
\]
is the signed or spectral line graph, up to the standard sign convention
\cite{GerminaHameedZaslavsky2011,Zaslavsky2010Matrices,BelardoPisanskiStanicZaslavsky2023}.

For the purposes of the present paper we work intrinsically from the incidence
matrix and retain the name \emph{antisymmetric line graph} because it is this
object's role in the doubled-lift decomposition that motivates our study. The
novelty here lies not in first introducing the signed graph itself, but in
developing new unsigned-graph consequences from its canonical switching class,
especially those involving frustration, odd-cycle transversals, and spectral
lower bounds.

\subsection{Canonical Orientation and Incidence}

Let $G=(V,E)$ be a finite simple graph.
Although $G$ is undirected, we fix an arbitrary reference orientation of its
edges for the purpose of computation.
This choice plays no structural role and will be shown to affect the
construction only up to switching equivalence.

Let $D$ be the $|V|\times|E|$ oriented incidence matrix of $G$, as defined in
Section~\ref{sec:preliminaries}.
The matrix $D^TD$ encodes both adjacency relations among edges and their relative
orientations.

The key point is that the same matrix $D^{\mathsf T}D$ simultaneously records
which pairs of edges meet and, when they do meet, whether the two incidences are
coherent or conflicting. This leads to the following signed refinement of the
line graph.

\subsection{Definition via Signed Adjacency}

Recall that the adjacency matrix of the ordinary line graph can be written as
\[
A_{L(G)} = |D^{\mathsf T} D| - 2I,
\]
where $|\cdot|$ denotes entrywise absolute value and $I$ is the identity matrix.
In particular, $|D^{\mathsf T} D|$ records which pairs of edges share a common endpoint.

\begin{definition}[Antisymmetric Line Graph]
\label{def:ALG}
The \emph{antisymmetric line graph} $\mathcal{A}(G)$ is the signed graph with
vertex set $E(G)$ and signed adjacency matrix
\begin{equation}
\label{eq:ALG-def}
A_{\mathcal{A}(G)} \;:=\; D^{\mathsf T} D - 2I .
\end{equation}
Equivalently, since $A_{L(G)}$ has $0/1$ off--diagonal entries and zero diagonal,
one may write
\[
A_{\mathcal{A}(G)} \;=\; (D^{\mathsf T} D)\circ A_{L(G)},
\]
where $\circ$ denotes the Hadamard (entrywise) product.
\end{definition}

\begin{lemma}[ALG refines the line graph]
\label{lem:ALG-refines-LG}
Let $G$ be a finite graph and let $\mathcal A(G)$ be its antisymmetric line graph.
Then
\[
|A_{\mathcal A(G)}| = A_{L(G)}.
\]
Equivalently, the underlying unsigned graph of $\mathcal A(G)$ is exactly the
line graph $L(G)$.
\end{lemma}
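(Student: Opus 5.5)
The plan is to compute the entries of $D^{\mathsf T}D$ directly from the definition of the oriented incidence matrix, and then compare them entrywise with $A_{L(G)}$. For edges $e,f\in E(G)$ we have
\[
(D^{\mathsf T}D)_{ef}=\sum_{v\in V(G)} D_{ve}D_{vf}.
\]
The analysis splits into the diagonal case $e=f$ and the off-diagonal case $e\neq f$.

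\emph{Diagonal entries.} Each column of $D$ has exactly two nonzero entries, namely $+1$ at the head and $-1$ at the tail. Hence $(D^{\mathsf T}D)_{ee}=2$, so $(A_{\mathcal A(G)})_{ee}=0$, which matches the zero diagonal of $A_{L(G)}$.

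\emph{Off-diagonal entries, $e\neq f$.} A summand $D_{ve}D_{vf}$ is nonzero only when $v$ is an endpoint of both $e$ and $f$. Since $G$ is simple, two distinct edges share at most one endpoint. This no-parallel-edges observation is the one point needing care: with parallel edges the sum could be $0$ or $\pm2$, and the statement would fail. So there are two subcases.
\begin{itemize}
\item If $e$ and $f$ share no endpoint, the sum is empty, and $(D^{\mathsf T}D)_{ef}=0=(A_{L(G)})_{ef}$.
\item If they share exactly one endpoint $v$, the sum has the single term $D_{ve}D_{vf}\in\{\pm1\}$. Its absolute value is $1=(A_{L(G)})_{ef}$.
\end{itemize}
The sign of this entry is $+1$ exactly when $v$ plays the same role (head or tail) in both edges.

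Since subtracting $2I$ affects only the diagonal, combining the two cases gives $|A_{\mathcal A(G)}|=A_{L(G)}$ entrywise. The same computation also justifies the Hadamard-product description in Definition~\ref{def:ALG}. Reading $|A_{\mathcal A(G)}|$ as an adjacency matrix then shows that the underlying unsigned graph of $\mathcal A(G)$ is $L(G)$.

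There is no real obstacle here. The only step requiring attention is invoking simplicity to rule out two shared endpoints. Note also that the reference orientation plays no role in the absolute values, so the conclusion holds for every choice of orientation.
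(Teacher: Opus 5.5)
Your proof is correct and takes the same route as the paper: compute $D^{\mathsf T}D$ entrywise (diagonal entries $2$; off-diagonal entries $\pm1$ exactly when the two edges share an endpoint, $0$ otherwise), note that subtracting $2I$ only clears the diagonal, and make explicit the appeal to simplicity that the paper leaves tacit. One harmless slip in your aside on parallel edges: for two parallel edges $e,f$ between $u$ and $v$ the entry is always $\pm2$, never $0$, because $D_{ve}=-D_{ue}$ and $D_{vf}=-D_{uf}$ force $D_{ue}D_{uf}=D_{ve}D_{vf}$.
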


\begin{proof}
By Definition~\ref{def:ALG},
\[
A_{\mathcal A(G)} = D^{\mathsf T}D - 2I.
\]
The matrix $D^{\mathsf T}D$ has diagonal entries $2$, and its off-diagonal entry
$(D^{\mathsf T}D)_{ef}$ is nonzero if and only if the distinct edges $e$ and $f$
share a common endpoint in $G$; in that case $(D^{\mathsf T}D)_{ef}\in\{\pm1\}$.
Subtracting $2I$ removes the diagonal and leaves the same $0/1$ support pattern
as the line graph adjacency matrix. Taking entrywise absolute values yields
$|A_{\mathcal A(G)}|=A_{L(G)}$.
\end{proof}

Equivalently, two distinct edges $e,f\in E(G)$ are adjacent in
$\mathcal{A}(G)$ if and only if they share a common endpoint in $G$.
The sign of this adjacency is determined as follows:
\begin{itemize}
  \item if $e$ and $f$ have the \emph{same incidence} at their common vertex
  (both enter or both leave), the corresponding edge of
  $\mathcal{A}(G)$ is assigned sign $+1$;
  \item if $e$ and $f$ are oriented \emph{transitively} through their common
  vertex (one enters and the other leaves), the corresponding edge is
  assigned sign $-1$.
\end{itemize}
Edges of $G$ that do not share a vertex are nonadjacent in
$\mathcal{A}(G)$.

\subsection{Canonical Nature and Switching Invariance}

Although the definition above depends on a chosen reference orientation of $G$,
the resulting signed graph is canonical in the sense of signed graph theory.

\begin{proposition}[Orientations and switching]
\label{prop:switching}
Let $\vec{G}$ and $\vec{G}'$ be two reference orientations of the same graph $G$.
If $\vec{G}'$ is obtained from $\vec{G}$ by reversing exactly the edges in a set
$F\subseteq E(G)$, then the corresponding antisymmetric line graphs
$\mathcal{A}(\vec{G})$ and $\mathcal{A}(\vec{G}')$ differ by switching at the
vertex set $F\subseteq V(L(G))=E(G)$.
In particular, all orientations of $G$ induce exactly one switching class of
signed line graphs.
\end{proposition}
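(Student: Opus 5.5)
The plan is to express reorientation as a column rescaling of the incidence matrix and read off the effect on $D^{\mathsf T}D$. Let $D$ and $D'$ be the oriented incidence matrices of $\vec G$ and $\vec G'$. Reversing an edge $e$ swaps its head and tail, so column $e$ of $D'$ is the negative of column $e$ of $D$, and all other columns agree. Let $S_F$ be the diagonal $|E|\times|E|$ matrix with $(S_F)_{ee}=-1$ for $e\in F$ and $+1$ otherwise. Then
\[
D' = D S_F .
\]

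Since $S_F$ is diagonal with $S_F^{\mathsf T}=S_F$ and $S_F^2=I$, we get
\[
A_{\mathcal A(\vec G')} = (DS_F)^{\mathsf T}(DS_F) - 2I = S_F\,D^{\mathsf T}D\,S_F - 2I = S_F\bigl(D^{\mathsf T}D-2I\bigr)S_F = S_F\,A_{\mathcal A(\vec G)}\,S_F .
\]
Entrywise this reads $(A')_{ef}=s_e s_f A_{ef}$, with $s_e=(S_F)_{ee}$. An entry changes sign exactly when precisely one of $e,f$ lies in $F$, and the zero pattern is unchanged, consistent with Lemma~\ref{lem:ALG-refines-LG}.

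It remains to identify this conjugation with switching. Switching at a single vertex $e$ of $L(G)$ negates every edge incident to $e$, which is conjugation by $S_{\{e\}}$. Switchings at distinct vertices commute, so performing the sequence of single-vertex switchings at the elements of $F$ gives conjugation by $\prod_{e\in F}S_{\{e\}}=S_F$. An edge $ef$ with both endpoints in $F$ is negated twice and so keeps its sign, as the formula requires. Hence $\mathcal A(\vec G')$ is obtained from $\mathcal A(\vec G)$ by switching at $F$.

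For the final assertion, any two orientations of $G$ differ by reversing some set $F$, namely the edges on which they disagree. By the above, their signed line graphs are switching equivalent, so all orientations give a single switching class. There is no real obstacle in this argument. The only point needing care is the bookkeeping that switching at a set is well defined, independent of order, and equals conjugation by $S_F$.
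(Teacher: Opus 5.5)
Your proposal is correct and follows essentially the same route as the paper: reversing an edge negates its column of $D$, so the new adjacency matrix is the conjugate of the old one by a diagonal $\pm1$ matrix, and that conjugation is switching at $F$. The paper argues one edge at a time here and uses the matrix form $D'=DQ$ only later, in the proof of Proposition~\ref{prop:unique-signing}. You package all reversals into $S_F$ at once and check that switching at a set is order-independent, but the argument is the same.
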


\begin{proof}
Reversing the orientation of a single edge $e$ multiplies the corresponding
column of the oriented incidence matrix $D$ by $-1$. Hence in $D^{\mathsf T}D$
the $e$th row and $e$th column are multiplied by $-1$, while all other entries
are unchanged. This is exactly switching at the vertex $e$ of the signed graph
$\mathcal A(G)$, whose vertex set is $E(G)$. Reversing all edges in
$F\subseteq E(G)$ therefore induces switching at the vertex set $F$, and the
final statement follows.
\end{proof}

\begin{corollary}
\label{cor:switching-class-size}
Let $m=|E(G)|$, and let $c(L(G))$ denote the number of connected components of
the line graph $L(G)$. Then the map from orientations of $G$ to signed graphs in
the switching class of $\mathcal A(G)$ is surjective, each element of the
switching class is induced by exactly $2^{c(L(G))}$ orientations, and therefore
the switching class has cardinality
\[
2^{\,m-c(L(G))}.
\]
In particular, if $L(G)$ is connected, then the switching class of
$\mathcal A(G)$ has size $2^{m-1}$.
\end{corollary}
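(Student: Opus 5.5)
Throughout, identify orientations of $G$ with vectors $\varepsilon\in\{\pm1\}^{E(G)}$ relative to the fixed reference orientation: $\varepsilon_e=-1$ means that $e$ is reversed. Likewise, identify switchings of $\mathcal A(G)$ with subsets $S\subseteq E(G)=V(L(G))$, or equivalently with vectors $s\in\{\pm1\}^{E(G)}$. Switching by $s$ sends the signed adjacency matrix $A$ to $\operatorname{diag}(s)\,A\,\operatorname{diag}(s)$.

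By Proposition~\ref{prop:switching}, the orientation $\varepsilon$ produces the signed graph $\operatorname{diag}(\varepsilon)\,A_{\mathcal A(G)}\operatorname{diag}(\varepsilon)$. So the map $\Phi$ from orientations to signed graphs factors as
\[
\{\pm1\}^{E(G)}\to\{\text{switchings}\}\to\text{switching class},
\]
where the first arrow is the identity and the second is the action on the fixed signed graph $\mathcal A(G)$. Surjectivity is then immediate, since the switching class is by definition the orbit of this action.

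For the fiber sizes, I would compute the stabilizer of the action. Two vectors $s,s'$ give the same signed graph exactly when $t=ss'$ (entrywise product) fixes $A_{\mathcal A(G)}$. That means $t_e t_f A_{ef}=A_{ef}$ for every $e\neq f$. By Lemma~\ref{lem:ALG-refines-LG}, $A_{ef}\neq0$ exactly when $ef$ is an edge of $L(G)$. Hence the condition is that $t_e=t_f$ whenever $e\sim f$ in $L(G)$, i.e. that $t$ is constant on each connected component of $L(G)$. The stabilizer is therefore a subgroup of $\{\pm1\}^{E(G)}$ of order $2^{c(L(G))}$.

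Since $\{\pm1\}^{E(G)}$ is a group acting on the class, every fiber of $\Phi$ is a coset of this stabilizer, so each element of the class has exactly $2^{c(L(G))}$ preimages. By orbit–stabilizer, the class has cardinality $2^{m}/2^{c(L(G))}=2^{m-c(L(G))}$. The connected case is the specialization $c(L(G))=1$.

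The only point needing care is the stabilizer computation. Isolated vertices of $L(G)$ (isolated edges of $G$) each count as a component, with free sign, and this is consistent with the formula. Since all signs are $\pm1$, there are no degenerate cases beyond that.
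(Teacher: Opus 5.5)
Your proof is correct and follows essentially the same route as the paper: orientations correspond to switching vectors, two orientations give the same signed graph exactly when they differ by a switching that is constant on each component of $L(G)$, and dividing $2^m$ by $2^{c(L(G))}$ gives the count. Your orbit--stabilizer framing and the explicit check that $t_et_fA_{ef}=A_{ef}$ forces $t_e=t_f$ along every edge of $L(G)$ also justify the paper's bare assertion that switching whole components is the only way to leave every sign unchanged.
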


\begin{proof}
There are $2^m$ orientations of $G$, one for each choice of direction on each
edge. By Proposition~\ref{prop:switching}, two orientations induce the same
signed graph precisely when they differ by switching that is trivial on each
connected component of $L(G)$. On a connected component of a signed graph,
switching at all vertices leaves every edge sign unchanged, and these are the
only switching subsets with this property. Thus each induced signed graph has
exactly $2^{c(L(G))}$ orientation-preimages, one for each choice of whether or
not to reverse all edges in each component of $L(G)$. Dividing $2^m$ by
$2^{c(L(G))}$ gives the formula.
\end{proof}
The construction above is not ad hoc. In the symmetric lift $\mathrm{HL}'_2(G)$, the involution reversing edge orientations induces a decomposition into symmetric and antisymmetric subspaces
\cite{BalHL}.
The operator governing the antisymmetric subspace is precisely $D^{\mathsf T}D - 2I$,
restricted to edge adjacencies. Under this identification, the ordinary line graph $L(G)$ governs the symmetric sector of the lift, while the antisymmetric line graph $\mathcal{A}(G)$ governs the antisymmetric sector. The present work isolates $\mathcal{A}(G)$ as an object of independent interest.

\section{ALG as a canonical edge-space realization of the Laplacian}
\label{sec:laplacian-geometry}

The same incidence formula that defines the antisymmetric line graph also places
the Laplacian of $G$ directly on edge space.
This is not a second main theorem line of the paper, but a compact structural
observation that helps explain why $\mathcal A(G)$ is a natural object.

Throughout this section, let $G$ be a connected finite simple graph with
oriented incidence matrix $D$, Laplacian
\[
L:=DD^{\mathsf T},
\]
and signed adjacency matrix
\[
S:=A_{\mathcal A(G)}=D^{\mathsf T}D-2I.
\]

\begin{proposition}[Edge-space Laplacian realization]
\label{prop:edge-space-laplacian}
One has
\[
S+2I=D^{\mathsf T}D.
\]
Consequently:
\begin{enumerate}
\item the nonzero eigenvalues of $S+2I$ coincide with the nonzero eigenvalues of
$L$;
\item the edge space admits the orthogonal decomposition
\[
\mathbb R^{E(G)}=\operatorname{im}(D^{\mathsf T})\oplus\ker(D);
\]
\item the cycle space is exactly the distinguished $(-2)$-eigenspace:
\[
\ker(D)=\ker(S+2I).
\]
\end{enumerate}
\end{proposition}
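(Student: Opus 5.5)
The identity $S+2I=D^{\mathsf T}D$ is immediate from Definition~\ref{def:ALG}, since $S=A_{\mathcal A(G)}=D^{\mathsf T}D-2I$. The three consequences then follow from standard linear algebra applied to the pair $D^{\mathsf T}D$ and $DD^{\mathsf T}=L$. No graph-theoretic input is needed beyond the definition of $D$.

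For part (1), I would use the singular value decomposition of $D$. Alternatively, there is a direct argument. If $D^{\mathsf T}Dx=\lambda x$ with $\lambda\neq0$, then $Dx\neq 0$, and $L(Dx)=D(D^{\mathsf T}Dx)=\lambda Dx$. The map $x\mapsto Dx$ is injective on the $\lambda$-eigenspace, because $\lambda x = D^{\mathsf T}(Dx)$ recovers $x$. Symmetrically, $y\mapsto D^{\mathsf T}y$ sends the $\lambda$-eigenspace of $L$ injectively into that of $D^{\mathsf T}D$. Hence the nonzero eigenvalues coincide, and with equal multiplicities.

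For part (2), I would apply the fundamental theorem of linear algebra to $D:\mathbb R^{E}\to\mathbb R^{V}$. For $x\in\mathbb R^{E}$ and $y\in\mathbb R^{V}$ we have $\langle x, D^{\mathsf T}y\rangle=\langle Dx,y\rangle$. Hence $\ker(D)=\operatorname{im}(D^{\mathsf T})^{\perp}$, and by finite-dimensionality $\mathbb R^{E}=\operatorname{im}(D^{\mathsf T})\oplus\ker(D)$ orthogonally.

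For part (3), I would show $\ker(D^{\mathsf T}D)=\ker(D)$. One inclusion is trivial. Conversely, $D^{\mathsf T}Dx=0$ gives $\|Dx\|^2=x^{\mathsf T}D^{\mathsf T}Dx=0$. Since $\ker(S+2I)=\ker(D^{\mathsf T}D)$, this is exactly the $(-2)$-eigenspace of $S$.

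To justify the name ``cycle space,'' I would recall the standard fact that $\ker(D)$ is the space of flows. Its dimension is $m-n+c$, which equals $m-n+1$ for connected $G$, and it is spanned by signed indicator vectors of cycles with respect to the reference orientation. This identification is standard, so I would simply cite it. The only point needing care is the multiplicity claim in (1), and the injectivity argument above settles it.
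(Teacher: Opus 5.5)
Your proposal is correct and follows essentially the same route as the paper. Both prove the identity from Definition~\ref{def:ALG}, use the standard fact that $D^{\mathsf T}D$ and $DD^{\mathsf T}$ share their nonzero eigenvalues, use $\operatorname{im}(D^{\mathsf T})^{\perp}=\ker(D)$, and use the $\|Dx\|^2=0$ argument for (3). The only differences are that you spell out the eigenvector-transport injectivity argument, which also gives equal multiplicities, where the paper just cites standard linear algebra, and that you add the flow-space justification of the name ``cycle space,'' which the paper takes for granted.
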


\begin{proof}
The identity is immediate from Definition~\ref{def:ALG}. Since $D^{\mathsf T}D$
and $DD^{\mathsf T}$ have the same nonzero eigenvalues, (1) follows from standard
linear algebra. The orthogonal decomposition in (2) is the identity
\[
\operatorname{im}(D^{\mathsf T})^{\perp}=\ker(D).
\]
For (3), one has
\[
(S+2I)x=0
\Longleftrightarrow
D^{\mathsf T}Dx=0
\Longleftrightarrow
x^{\mathsf T}D^{\mathsf T}Dx=\|Dx\|^2=0
\Longleftrightarrow
Dx=0.
\]
\end{proof}

\begin{proposition}[Transport of Laplacian modes]
\label{prop:transported-modes}
Let
\[
0=\lambda_1<\lambda_2\le \cdots \le \lambda_n
\]
be the Laplacian eigenvalues of $G$, with orthonormal eigenvectors
$v_1,\dots,v_n$ satisfying $Lv_i=\lambda_i v_i$.
Then for each $i\ge 2$ the edge vector
\[
w_i:=D^{\mathsf T}v_i
\]
is a nonzero eigenvector of $S$ with eigenvalue $\lambda_i-2$:
\[
Sw_i=(\lambda_i-2)w_i.
\]
Moreover,
\[
\|w_i\|^2=\lambda_i,
\qquad
\langle w_i,w_j\rangle=\lambda_i\delta_{ij}\quad(i,j\ge 2),
\]
so the normalized vectors
\[
\left\{\frac{D^{\mathsf T}v_i}{\sqrt{\lambda_i}}:2\le i\le n\right\}
\]
form an orthonormal basis of $\operatorname{im}(D^{\mathsf T})$.
\end{proposition}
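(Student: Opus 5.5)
The plan is to compute everything directly from the factorizations $S=D^{\mathsf T}D-2I$ and $L=DD^{\mathsf T}$, using associativity to move $D$ and $D^{\mathsf T}$ past each other.

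\textbf{Step 1: the eigenvalue equation.} Fix $i\ge2$ and set $w_i=D^{\mathsf T}v_i$. Then
\[
D^{\mathsf T}D\,w_i=D^{\mathsf T}(DD^{\mathsf T})v_i=D^{\mathsf T}Lv_i=\lambda_i w_i,
\]
and so
\[
Sw_i=D^{\mathsf T}Dw_i-2w_i=(\lambda_i-2)w_i .
\]

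\textbf{Step 2: the Gram relations.} For $i,j\ge2$,
\[
\langle w_i,w_j\rangle=v_i^{\mathsf T}DD^{\mathsf T}v_j=v_i^{\mathsf T}Lv_j=\lambda_j\langle v_i,v_j\rangle=\lambda_i\delta_{ij}.
\]
In particular $\|w_i\|^2=\lambda_i$.

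\textbf{Step 3: nonvanishing.} Nonvanishing of $w_i$ for $i\ge2$ is exactly where connectedness enters. We have $\ker L=\ker D^{\mathsf T}$, since $x^{\mathsf T}Lx=\|D^{\mathsf T}x\|^2$. For connected $G$ this kernel is spanned by the all-ones vector, so $\lambda_1=0$ is simple and $\lambda_i>0$ for $i\ge2$. Hence $\|w_i\|^2=\lambda_i>0$.

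\textbf{Step 4: the basis statement.} By Step 2 the normalized vectors $w_i/\sqrt{\lambda_i}$ are orthonormal, and each lies in $\operatorname{im}(D^{\mathsf T})$. It remains to count dimensions. We have $\dim\operatorname{im}(D^{\mathsf T})=\operatorname{rank}D^{\mathsf T}=n-\dim\ker D^{\mathsf T}=n-1$ by Step 3. This matches the $n-1$ orthonormal vectors, so they form an orthonormal basis. Alternatively, $v_1,\dots,v_n$ span $\mathbb R^{V}$ and $D^{\mathsf T}v_1=0$, so the images of $v_2,\dots,v_n$ span the image.

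\textbf{Main obstacle.} The only real subtlety is the positivity of $\lambda_i$ for $i\ge2$, i.e.\ that $\lambda_1=0$ is simple and $w_i\neq0$. This relies on the standing connectivity assumption of this section through $\ker D^{\mathsf T}=\operatorname{span}(\mathbf 1)$, which I will justify by noting that $D^{\mathsf T}x=0$ forces $x_u=x_v$ along every edge. The rest is routine matrix manipulation.
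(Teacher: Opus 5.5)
Your proof is correct and follows the same route as the paper: you obtain the eigenvalue relation from $D^{\mathsf T}DD^{\mathsf T}v_i=D^{\mathsf T}Lv_i$ and the Gram relations from $v_i^{\mathsf T}Lv_j=\lambda_j\delta_{ij}$, using connectedness to get $D^{\mathsf T}v_1=0$. Your Steps~3 and~4 (positivity of $\lambda_i$ for $i\ge 2$ and the dimension count $\operatorname{rank}D^{\mathsf T}=n-1$) spell out the nonvanishing and spanning claims that the paper's proof leaves implicit.
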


\begin{proof}
Since $v_1$ is constant on a connected graph, $D^{\mathsf T}v_1=0$. For $i\ge2$,
\[
S(D^{\mathsf T}v_i)
=(D^{\mathsf T}D-2I)D^{\mathsf T}v_i
=D^{\mathsf T}DD^{\mathsf T}v_i-2D^{\mathsf T}v_i
=D^{\mathsf T}Lv_i-2D^{\mathsf T}v_i
=(\lambda_i-2)D^{\mathsf T}v_i.
\]
Also,
\[
\|D^{\mathsf T}v_i\|^2=v_i^{\mathsf T}DD^{\mathsf T}v_i=v_i^{\mathsf T}Lv_i=\lambda_i,
\]
and similarly
\[
\langle D^{\mathsf T}v_i,D^{\mathsf T}v_j\rangle
=v_i^{\mathsf T}DD^{\mathsf T}v_j
=v_i^{\mathsf T}Lv_j
=\lambda_j v_i^{\mathsf T}v_j
=\lambda_i\delta_{ij}.
\]
\end{proof}

\begin{corollary}[The transported Fiedler mode]
\label{cor:fiedler-transport}
Let $v_2$ be a unit Fiedler vector of $G$, so that $Lv_2=\lambda_2v_2$.
Then
\[
w_2:=D^{\mathsf T}v_2
\]
is an eigenvector of $S$ with eigenvalue $\lambda_2-2$.
For every oriented edge $e=(x,y)$,
\[
w_2(e)=v_2(y)-v_2(x),
\]
so $|w_2(e)|$ records the edgewise variation of the Fiedler mode across $e$.
\end{corollary}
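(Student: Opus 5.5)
This corollary is the case $i=2$ of Proposition~\ref{prop:transported-modes}, together with a direct evaluation of $D^{\mathsf T}v_2$ from the definition of the incidence matrix. I would proceed in two steps.

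\emph{Eigenvector claim.} Since $G$ is connected, $\lambda_1=0$ is simple. The Fiedler vector $v_2$ is therefore orthogonal to the constant vector and satisfies $\lambda_2>0$. Proposition~\ref{prop:transported-modes} with $i=2$ then gives
\[
Sw_2=(\lambda_2-2)w_2,\qquad \|w_2\|^2=\lambda_2>0,
\]
so $w_2\neq 0$ and it is a genuine eigenvector of $S$. For a self-contained check, one line suffices:
\[
Sw_2=(D^{\mathsf T}D-2I)D^{\mathsf T}v_2=D^{\mathsf T}Lv_2-2D^{\mathsf T}v_2=(\lambda_2-2)w_2.
\]
Nonvanishing follows from $\|D^{\mathsf T}v_2\|^2=v_2^{\mathsf T}Lv_2=\lambda_2$.

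\emph{Edge formula.} For an oriented edge $e=(x,y)$ with tail $x$ and head $y$, the column of $D$ indexed by $e$ has exactly two nonzero entries, $D_{ye}=+1$ and $D_{xe}=-1$. Hence
\[
w_2(e)=(D^{\mathsf T}v_2)(e)=\sum_{u\in V}D_{ue}\,v_2(u)=v_2(y)-v_2(x).
\]
The interpretation of $|w_2(e)|$ as the edgewise variation of the Fiedler mode is immediate. This value does not depend on the reference orientation: reversing $e$ only flips the sign of $w_2(e)$, which is consistent with the switching of Proposition~\ref{prop:switching}.

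There is no real obstacle here. The only point needing care is that connectedness, a standing assumption in this section, guarantees $\lambda_2>0$, and this is what makes $w_2$ nonzero.
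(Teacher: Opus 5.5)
Your proposal is correct and matches the paper's proof: specialize Proposition~\ref{prop:transported-modes} to $i=2$, then read off $w_2(e)=v_2(y)-v_2(x)$ from the two nonzero entries of the incidence column of $e$. You also note explicitly that connectedness gives $\lambda_2>0$, so $\|w_2\|^2=\lambda_2\neq 0$ and $w_2$ is a genuine eigenvector; the paper leaves this point implicit.
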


\begin{proof}
This is the case $i=2$ of Proposition~\ref{prop:transported-modes}, together with
the coordinate formula coming directly from the incidence column of an oriented
edge.
\end{proof}

\begin{corollary}[Matrix-tree identity on $\mathcal A(G)$]
\label{cor:matrix-tree}
If $G$ is connected on $n$ vertices, then
\[
\tau(G)=\frac1n\det\nolimits'(S+2I),
\]
where $\det'(M)$ denotes the product of the nonzero eigenvalues of $M$.
\end{corollary}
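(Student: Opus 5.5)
The plan is to combine the classical Matrix-Tree Theorem with part (1) of Proposition~\ref{prop:edge-space-laplacian}. Kirchhoff's theorem, in its spectral form, says that for a connected graph $G$ on $n$ vertices with Laplacian $L=DD^{\mathsf T}$ and eigenvalues $0=\lambda_1<\lambda_2\le\cdots\le\lambda_n$,
\[
\tau(G)=\frac1n\prod_{i=2}^n\lambda_i=\frac1n\det\nolimits'(L).
\]
If one wants this derived rather than quoted, use the fact that every cofactor of $L$ equals $\tau(G)$. The sum of the $n$ principal $(n-1)\times(n-1)$ minors is the coefficient of $t$ in $\det(tI-L)$, up to sign. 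It also equals the elementary symmetric function $e_{n-1}(\lambda_1,\dots,\lambda_n)=\prod_{i\ge2}\lambda_i$, because $\lambda_1=0$. This gives $n\tau(G)=\prod_{i\ge 2}\lambda_i$.

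Next I would transfer this to edge space. Since $G$ is connected, $\ker L$ is spanned by the all-ones vector, so $L$ has exactly $n-1$ nonzero eigenvalues $\lambda_2,\dots,\lambda_n$, counted with multiplicity. By Proposition~\ref{prop:edge-space-laplacian}(1), $S+2I=D^{\mathsf T}D$ has the same nonzero eigenvalues as $L=DD^{\mathsf T}$.

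The one point needing care is that "coincide" must hold with multiplicities, since $\det'$ is a product over the nonzero spectrum. I would justify this directly using Proposition~\ref{prop:transported-modes}. The vectors $w_i=D^{\mathsf T}v_i$ for $i\ge 2$ are orthogonal eigenvectors of $D^{\mathsf T}D$ with eigenvalues $\lambda_i$, and they span $\operatorname{im}(D^{\mathsf T})$. On the orthogonal complement $\ker D$, the operator $D^{\mathsf T}D$ vanishes. Hence the nonzero spectrum of $S+2I$ is exactly the multiset $\{\lambda_2,\dots,\lambda_n\}$, and $\det'(S+2I)=\prod_{i\ge2}\lambda_i$.

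Combining the two displays gives $\tau(G)=\frac1n\det'(S+2I)$. No step here is a real obstacle. The only subtle point is the multiplicity bookkeeping just described, which Proposition~\ref{prop:transported-modes} settles cleanly. Connectedness of $G$ is used twice: for $\ker L$ to be one-dimensional, and for the hypothesis of Proposition~\ref{prop:transported-modes}.
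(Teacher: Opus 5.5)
Your proof is correct and follows the paper's argument: the spectral form of the Matrix-Tree Theorem combined with Proposition~\ref{prop:edge-space-laplacian}(1). Your multiplicity check via Proposition~\ref{prop:transported-modes} spells out what the paper leaves to the standard fact that $D^{\mathsf T}D$ and $DD^{\mathsf T}$ share their nonzero eigenvalues with multiplicity. Your optional derivation of the spectral form from the cofactor form is likewise just an elaboration of a step the paper quotes.
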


\begin{proof}
By the matrix-tree theorem,
\[
\tau(G)=\frac1n\prod_{i=2}^n\lambda_i.
\]
By Proposition~\ref{prop:edge-space-laplacian}, the nonzero eigenvalues of
$S+2I$ are exactly $\lambda_2,\dots,\lambda_n$.
\end{proof}

\begin{remark}\label{rem:laplacian-geometry-role}
The results of this section are elementary consequences of the shifted Gram
identity
\[
A_{\mathcal A(G)}+2I=D^{\mathsf T}D,
\]
but they are worth recording because they show that the same canonical signed
graph also carries the classical cut/cycle and transported Laplacian geometry of
edge space.
A fuller edge-flow treatment is to be developed separately.
\end{remark}

\section{Antisymmetric Line Graphs within Signed Graph Theory}
\label{sec:signed-class}

The antisymmetric line graph embeds ordinary graphs into the theory of signed
graphs in a highly constrained and canonical way.
In this section we clarify how antisymmetric line graphs fit within the broader
landscape of signed graph theory, and how they differ from arbitrary signings of
line graphs.

\subsection{Signed Line Graphs versus Antisymmetric Line Graphs}

Signed line graphs of signed graphs are classical objects in signed graph theory;
see, for instance, Zaslavsky's matrix formula
\[
A(\Lambda(\Sigma))=2I-H(\Sigma)^{\mathsf T}H(\Sigma)
\]
and the later signed-spectral treatment of the spectral line graph
\cite{Zaslavsky2010Matrices,GerminaHameedZaslavsky2011,BelardoPisanskiStanicZaslavsky2023}.
For an ordinary graph $G$ with a chosen orientation and incidence matrix $D$,
our matrix
\[
D^{\mathsf T}D-2I
\]
is exactly this classical signed-line-graph construction up to the prevailing
sign convention. We retain the name \emph{antisymmetric line graph} to emphasize
its role as the antisymmetric sector of the doubled lift in \cite{BalHL}.

The point of the present paper is therefore not to introduce a previously
unknown signed graph, but to use this canonical switching class as a source of
combinatorial invariants of the underlying unsigned graph. In particular, we
focus on frustration, odd-cycle-transversal phenomena, and spectral lower bounds
for these unsigned parameters, directions that do not appear to have been
pursued in the earlier signed-line-graph literature.

It is important to note that not every signing whose underlying
unsigned graph is a line graph arises as an antisymmetric line graph.
Indeed, arbitrary signings of $L(G)$ generally violate the local compatibility
conditions imposed by shared $2$--paths in the base graph.

\subsection{Rigidity of the ALG Signing}

The signing of $\mathcal{A}(G)$ is rigid in a strong sense.

\begin{proposition}[Canonical signing]\label{prop:unique-signing}
For every finite simple graph $G$, the signed graph $\mathcal A(G)$ is uniquely determined up to switching.
\end{proposition}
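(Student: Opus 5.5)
The plan is to read Proposition~\ref{prop:unique-signing} as saying that the switching class of $\mathcal A(G)$ depends only on the unsigned graph $G$. It does not depend on the auxiliary reference orientation used to write down $D$, nor on the labelling of $V(G)$ and $E(G)$. I will check each possible source of arbitrariness in Definition~\ref{def:ALG} and show that it changes $A_{\mathcal A(G)}$ at most by a switching or by a relabelling.

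\emph{Step 1 (orientation).} Let $D$ and $D'$ be incidence matrices for two reference orientations, with the same orderings of $V$ and $E$. Then $D'=D\,\Theta_F$, where $\Theta_F$ is the diagonal $\pm1$ matrix with $-1$ exactly on the reversed edges $F$. Hence
\[
D'^{\mathsf T}D'-2I=\Theta_F\,(D^{\mathsf T}D-2I)\,\Theta_F .
\]
Conjugation by $\Theta_F$ is switching at $F$. This is precisely Proposition~\ref{prop:switching}, which I will cite directly.

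\emph{Step 2 (labelling).} Reordering the vertices of $G$ replaces $D$ by $PD$ with $P$ a permutation matrix. Since $P^{\mathsf T}P=I$, the Gram matrix $D^{\mathsf T}D$ is unchanged. Reordering the edges replaces $D$ by $DQ$, which conjugates $A_{\mathcal A(G)}$ by $Q$. That is merely a relabelling of the vertex set $E(G)$ of the signed graph. An isomorphism $G\to G'$ transports an orientation of $G$ to one of $G'$, so isomorphic graphs yield isomorphic switching classes.

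\emph{Step 3 (intrinsic description).} To make "determined" concrete, I would also record the sign rule after Lemma~\ref{lem:ALG-refines-LG}. For edges $e,f$ meeting at $v$, the sign equals $D_{ve}D_{vf}$. By Corollary~\ref{cor:switching-class-size}, the class is exactly the set of signings obtained by letting the orientation range over all $2^m$ choices. The class is therefore an invariant of $G$ alone.

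I do not expect a genuine obstacle; the substance is all contained in Proposition~\ref{prop:switching}. The only point needing care is the implicit claim that $D^{\mathsf T}D$ involves no further hidden choices. I handle this in Step 2, using the invariance of the Gram matrix under the vertex permutation $P$ and the fact that simplicity of $G$ makes each off-diagonal entry $(D^{\mathsf T}D)_{ef}$ a single term $D_{ve}D_{vf}$.
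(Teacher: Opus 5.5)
Your Step~1 is the paper's argument. Writing $D'=DQ$ with $Q$ the diagonal $\pm1$ matrix of reversed edges gives $A_{\mathcal A(\vec G')}=Q\,A_{\mathcal A(\vec G)}\,Q$, and conjugation by such a $Q$ is switching. Your Steps~2 and~3 (invariance under relabelling $V(G)$ and $E(G)$, and surjectivity of orientations onto the switching class) go beyond what the paper records; they are correct but not needed for the stated result.
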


\begin{proof}
Choose any reference orientation of $G$, and let $D$ be the corresponding
oriented incidence matrix. By Definition~\ref{def:ALG}, the signed adjacency
matrix of $\mathcal A(G)$ is
\[
A_{\mathcal A(G)} = D^{\mathsf T}D - 2I.
\]
If a second reference orientation is obtained by reversing the edges in
$F\subseteq E(G)$, then its oriented incidence matrix is $D' = DQ$, where
$Q$ is the diagonal matrix indexed by $E(G)$ with entries
\[
Q_{ee} =
\begin{cases}
-1,& e\in F,\\
\phantom{-}1,& e\notin F.
\end{cases}
\]
Therefore
\[
A_{\mathcal A(\vec G')}
= (D')^{\mathsf T}D' - 2I
= Q(D^{\mathsf T}D - 2I)Q
= Q\,A_{\mathcal A(\vec G)}\,Q.
\]
Conjugation by a diagonal $\{\pm1\}$-matrix is precisely switching in signed
graph theory. Hence any two reference orientations of $G$ induce
switching-equivalent signed graphs, so $\mathcal A(G)$ is uniquely determined
up to switching.
\end{proof}

Thus, in the language of signed graph theory, $\mathcal A(G)$ is not an arbitrary
signing of a line graph but the canonical switching class associated to the
oriented incidence structure of $G$. This canonicality is classical in the
signed-line-graph setting \cite{Zaslavsky2010Matrices,BelardoPisanskiStanicZaslavsky2023};
what matters here is that this specific switching class supports new unsigned
graph invariants when one studies its frustration and related parameters.

\subsection{Forbidden Structure and Necessary Conditions}

Classical results characterize which graphs arise as line graphs via forbidden
induced subgraphs \cite{Beineke1970,West2001}.
Since the underlying unsigned graph of $\mathcal{A}(G)$ is always a line graph,
any signed graph arising as an antisymmetric line graph must, in particular,
satisfy all of these classical constraints.

Beyond the unsigned structure, the signing imposes additional restrictions.
In particular:
\begin{itemize}
  \item the sign of any triangle in $\mathcal{A}(G)$ is determined by whether the
  corresponding triple of edges in $G$ forms a cyclic or branching configuration;
  \item signed cycles must be compatible across overlapping $2$--paths;
  \item switching equivalence cannot eliminate signed obstructions arising from
  odd cycles in $G$.
\end{itemize}

These conditions rule out the vast majority of signed graphs, even among those
supported on line graphs.

\subsection{Partial characterization via forbidden signed induced subgraphs}
\label{subsec:forbidden-subgraphs}

A classical theorem of Beineke characterizes line graphs by a list of nine forbidden
\emph{unsigned} induced subgraphs \cite{Beineke1970}. Because the underlying unsigned graph
of $\mathcal A(G)$ is $L(G)$, any signed graph in the antisymmetric line graph family must,
at a minimum, have underlying graph avoiding these nine structures.

However, the canonical signing of $\mathcal A(G)$ imposes additional geometric constraints.
In particular, the cycle-parity phenomenon established in Theorem~\ref{thm:cycle-parity}
forces an infinite family of forbidden \emph{signed} induced subgraphs.

\begin{theorem}[Forbidden signed induced cycles]\label{thm:forbidden-cycles}
Let $\Sigma$ be a signed graph. If $\Sigma$ is switching-equivalent to $\mathcal A(G)$ for
some finite simple graph $G$, then for any $k\ge 4$, every induced $k$-cycle in $\Sigma$
has sign $(-1)^k$.

Equivalently, no $\mathcal A(G)$ (and hence no switching of it) contains, as an induced
signed subgraph,
\begin{enumerate}
\item an induced \emph{negative} cycle of even length $k\ge 4$, or
\item an induced \emph{positive} cycle of odd length $k\ge 5$.
\end{enumerate}
\end{theorem}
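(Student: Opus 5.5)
Since cycle signs are switching invariants, by Proposition~\ref{prop:unique-signing} it suffices to treat $\Sigma=\mathcal A(G)$ for one fixed reference orientation of $G$. An induced cycle of length $k$ in $\Sigma$ is then an induced $k$-cycle in the underlying graph $L(G)$ (Lemma~\ref{lem:ALG-refines-LG}). The first step is the standard structural fact about line graphs: for $k\ge 4$, an induced $k$-cycle $e_1e_2\cdots e_ke_1$ in $L(G)$ must come from a $k$-cycle $C=v_1v_2\cdots v_k$ of $G$, with $e_i=v_iv_{i+1}$ (indices mod $k$).

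To see this, note that consecutive edges $e_i,e_{i+1}$ share a vertex $u_i$. If $u_i=u_{i+1}$ for some $i$, then $e_i,e_{i+1},e_{i+2}$ all contain a common vertex, so $e_i$ and $e_{i+2}$ are adjacent in $L(G)$. For $k\ge 4$ these are nonconsecutive on the cycle, so this is a chord, contradicting inducedness. Hence the $u_i$ are pairwise consecutive-distinct, and $e_{i+1}=u_iu_{i+1}$. A repeated vertex $u_i=u_j$ with $j\ne i\pm1$ would again produce a chord, because $e_i$ and $e_{j}$ would then share that vertex. So the $u_i$ are distinct and form a cycle $C$ of length $k$ in $G$. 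In fact $C$ need not even be induced in $G$; only its edges matter.

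The main computation is the sign of the signed cycle $e_1\cdots e_k$ in $\mathcal A(G)$. The entry of $D^{\mathsf T}D$ for $e_{i-1},e_i$ at their common vertex $v_i$ equals $D_{v_i e_{i-1}}D_{v_i e_i}$. Multiplying over all $i$ gives
\[
\prod_{i=1}^k D_{v_ie_{i-1}}D_{v_ie_i}=\prod_{j=1}^k D_{v_je_j}\,D_{v_{j+1}e_j},
\]
obtained by regrouping the factors by edge rather than by vertex. For each edge $e_j=v_jv_{j+1}$ the two incidences have opposite signs, so each factor $D_{v_je_j}D_{v_{j+1}e_j}$ equals $-1$. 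The product is therefore $(-1)^k$, independent of orientation. This is presumably the content of Theorem~\ref{thm:cycle-parity}, which I would simply invoke if it is stated in this form.

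The equivalent formulation is just the contrapositive split by the parity of $k$. The main obstacle is the structural step, namely ruling out the star-type configuration, and this is exactly where the hypothesis $k\ge4$ enters. For $k=3$ a triangle in $L(G)$ may come from a $K_{1,3}$, and indeed triangle signs vary.
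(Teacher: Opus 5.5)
Your proof is correct and follows the paper's route. You reduce to $\mathcal A(G)$ by switching invariance, show that for $k\ge4$ an induced $k$-cycle of $L(G)$ must come from a $k$-cycle of $G$, and conclude that its sign is $(-1)^k$. The only differences are minor: you verify the sign directly by regrouping the incidence products edge by edge, which works for any orientation, whereas the paper cites Theorem~\ref{thm:cycle-parity}; and your explicit check that the shared vertices $u_i$ are pairwise distinct fills in a step the paper asserts without comment.
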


\begin{proof}
Let $C$ be an induced $k$-cycle in $\Sigma$, with $k\ge 4$. Switching preserves the sign
of every cycle, so we may assume $\Sigma=\mathcal A(G)$. Then the underlying graph
$|\Sigma|$ is $L(G)$, and $C$ corresponds to an induced $k$-cycle in $L(G)$ with vertex set
$\{e_1,\dots,e_k\}$, where each $e_i$ is an edge of $G$ and $e_i$ is adjacent in $L(G)$ to
$e_{i-1}$ and $e_{i+1}$ (indices modulo $k$).

Since $C$ is induced, $e_{i-1}$ is \emph{not} adjacent to $e_{i+1}$ in $L(G)$, hence the
edges $e_{i-1}$ and $e_{i+1}$ do not share a vertex in $G$. In particular, $e_i$ cannot
meet both $e_{i-1}$ and $e_{i+1}$ at the same endpoint (otherwise $e_{i-1}$ and $e_{i+1}$
would share that endpoint as well). Therefore $e_i$ meets $e_{i-1}$ at one endpoint and
meets $e_{i+1}$ at its other endpoint.

Define $v_i := e_i\cap e_{i+1}$ for $i=1,\dots,k$ (indices modulo $k$). The preceding
paragraph implies that each $e_i$ has endpoints $v_{i-1}$ and $v_i$, so $e_i=v_{i-1}v_i$.
Thus the edges $e_1,\dots,e_k$ form a simple $k$-cycle
\[
v_1v_2\cdots v_kv_1
\]
in $G$. The corresponding cycle on vertices $e_1,\dots,e_k$ in $\mathcal A(G)$ is exactly
the canonical lift of this $k$-cycle, and by Theorem~\ref{thm:cycle-parity} its sign is
$(-1)^k$. Hence the induced cycle $C$ has sign $(-1)^k$, as claimed.
\end{proof}

\begin{remark}
The restriction to $k\ge 4$ is necessary. An induced $3$-cycle in $L(G)$ can arise either
from a triangle in $G$ or from a $3$-star in $G$. These yield (in general) opposite signs in
$\mathcal A(G)$, cf.\ Theorem~\ref{thm:cycle-parity} and
Theorem~\ref{thm:delta3-combinatorial}. Thus both positive and negative induced triangles
can occur in antisymmetric line graphs.
\end{remark}

\subsection{Resolution of Line-Graph Ambiguities}

The rigidity of antisymmetric line graphs has an important consequence: it
eliminates the classical noninjectivity of the line graph construction.
Whitney’s exceptional pair $\{K_3,K_{1,3}\}$ arises because both graphs yield the
same underlying line graph.
However, their antisymmetric line graphs are distinguished by signed triangle
structure.

\medskip
\noindent\emph{Excluding the Whitney pair.}
It remains to rule out the exceptional possibility
$\{G,H\}=\{K_3,K_{1,3}\}$.
Although $L(K_3)\cong L(K_{1,3})\cong K_3$, their antisymmetric line graphs are
distinguished by the sign of the unique triangle cycle in the underlying $K_3$.

We compute this sign explicitly.
For a signed graph, the sign of a cycle is the product of the signs of its
edges, and this is invariant under switching.
In particular, for $\mathcal{A}(K_3)$ and $\mathcal{A}(K_{1,3})$ (both supported
on $K_3$), the sign of the unique $3$--cycle is a switching invariant.

\smallskip
\noindent\textbf{Case 1: $G=K_3$.}
Choose a reference orientation of the three edges forming a directed $3$--cycle.
Then at each vertex of $K_3$, the two incident edges have opposite incidence
signs (one enters and one leaves).
Equivalently, for each adjacent pair of edges $e\sim f$ in $L(K_3)$, the entry
$(D^{\mathsf T} D)_{ef}$ equals $-1$, so the corresponding signed adjacency in
$\mathcal{A}(K_3)$ is negative.\footnote{Here we use
$A_{\mathcal{A}(G)}=D^{\mathsf T} D-2I$ from Definition~\ref{def:ALG}; off--diagonal entries
coincide with those of $D^{\mathsf T} D$ whenever two edges share a vertex.}
Thus all three edges of $\mathcal{A}(K_3)$ are negative, and the unique triangle
cycle has sign $(-1)\cdot(-1)\cdot(-1)=-1$.

\smallskip
\noindent\textbf{Case 2: $H=K_{1,3}$.}
Let $v$ be the central vertex and orient all three edges away from $v$.
Then any pair of distinct edges incident to $v$ has the same incidence sign at
$v$ (both leave $v$), so for each adjacent pair $e\sim f$ in $L(K_{1,3})$ we have
$(D^{\mathsf T} D)_{ef}=+1$.
Hence all three edges of $\mathcal{A}(K_{1,3})$ are positive, and the unique
triangle cycle has sign $(+1)\cdot(+1)\cdot(+1)=+1$.

Therefore the signed $3$--cycle in $\mathcal{A}(K_3)$ is negative, while the
signed $3$--cycle in $\mathcal{A}(K_{1,3})$ is positive.
Since cycle signs are switching invariants, $\mathcal{A}(K_3)$ and
$\mathcal{A}(K_{1,3})$ are not switching equivalent, excluding the Whitney
exceptional pair.

\begin{theorem}[Reconstruction from the switching class]
\label{thm:reconstruction}
Let $G$ be a graph with no isolated vertices. Then the switching class of $\mathcal A(G)$
determines $G$ up to isomorphism. In general, the switching class of $\mathcal A(G)$
determines $G$ up to isomorphism modulo isolated vertices.
\end{theorem}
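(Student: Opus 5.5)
The plan is to reduce to Whitney's theorem (Theorem~\ref{thm:whitney}) component by component, and to use the triangle-sign computation carried out just above to rule out the exceptional pair. First I will fix what ``determines'' means. Suppose $G$ and $H$ have no isolated vertices, and there is a bijection $\phi:E(G)\to E(H)$ carrying $\mathcal A(G)$ onto a signed graph switching equivalent to $\mathcal A(H)$. The goal is to show $G\cong H$. The general statement modulo isolated vertices then follows by deleting isolated vertices, since these contribute nothing to $E(G)$ and hence nothing to $\mathcal A(G)$.

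\emph{Step 1: pass to the underlying line graphs.} Switching does not change the underlying unsigned graph. By Lemma~\ref{lem:ALG-refines-LG}, $\phi$ is therefore a graph isomorphism $L(G)\to L(H)$. Hence $\phi$ maps the connected components of $L(G)$ bijectively onto those of $L(H)$.

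Since $G$ has no isolated vertices, the components of $L(G)$ correspond exactly to the connected components $G_1,\dots,G_r$ of $G$: each $G_i$ has at least one edge, and $L(G_i)$ is connected. The same holds for the components $H_1,\dots,H_r$ of $H$. After relabelling, $\phi$ restricts to isomorphisms $L(G_i)\cong L(H_i)$. Moreover, restricting a switching to a vertex subset is again a switching, and the induced subgraph of $\mathcal A(G)$ on $E(G_i)$ is $\mathcal A(G_i)$. So each restriction carries $\mathcal A(G_i)$ to a signing switching equivalent to $\mathcal A(H_i)$.

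\emph{Step 2: apply Whitney componentwise.} Each $G_i$ and $H_i$ is connected. Theorem~\ref{thm:whitney} gives either $G_i\cong H_i$ or $\{G_i,H_i\}=\{K_3,K_{1,3}\}$.

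In the exceptional case, the unique triangle of the common line graph $K_3$ behaves as follows. It is negative in $\mathcal A(K_3)$ and positive in $\mathcal A(K_{1,3})$, by Cases~1 and~2 above. Cycle signs are invariant under switching. They are also preserved by signed isomorphism, since $\phi$ maps the triangle to the triangle. This contradicts the switching equivalence obtained in Step~1, so $G_i\cong H_i$ for every $i$. Taking the disjoint union of these isomorphisms gives $G\cong H$.

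\emph{Expected obstacle.} The only delicate point is bookkeeping rather than mathematics. Whitney's theorem requires connected graphs, so I must check three things:
\begin{enumerate}
\item the component correspondence is induced by $\phi$ itself;
\item the switching equivalence restricts to each component;
\item the triangle-sign argument is orientation independent.
\end{enumerate}
Item~(3) holds because Cases~1 and~2 compute a switching invariant, so by Proposition~\ref{prop:unique-signing} the choice of reference orientation there is harmless. The degenerate case of an edgeless $G$ is trivial: $\mathcal A(G)$ is empty and $G$ is determined modulo isolated vertices.
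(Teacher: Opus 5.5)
Your proof is correct and follows the same route as the paper. Both arguments pass to $L(G)$, invoke Whitney's theorem, and exclude $\{K_3,K_{1,3}\}$ using the switching-invariant sign of the unique triangle.

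Your componentwise bookkeeping is more careful than the paper's. The paper argues only for connected $G$ and then asserts that $L(G)$ determines $G$ modulo isolated vertices in general. That is false as stated, e.g.\ for $K_3\sqcup K_2$ versus $K_{1,3}\sqcup K_2$. Restricting the switching equivalence to each component and applying the triangle-sign argument there, as you do, is what actually handles the disconnected case.
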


\begin{proof}
Switching does not change the underlying unsigned graph, hence the switching class of $\mathcal A(G)$
determines $L(G)$.
If $G$ is connected, Whitney's theorem (Theorem~\ref{thm:whitney}) implies that $L(G)$ determines $G$
up to isomorphism, except for the exceptional pair $\{K_3,K_{1,3}\}$.
The computation above shows that
$\mathcal A(K_3)$ and $\mathcal A(K_{1,3})$ are not switching equivalent, since the unique triangle
in the common underlying graph $K_3$ has opposite cycle sign in the two cases.
Thus the exceptional pair is excluded, and $G$ is determined.

For a general graph $G$, the line graph $L(G)$ determines $G$ up to isomorphism except for isolated
vertices (which are invisible to $L(G)$). The same therefore holds for the switching class of $\mathcal A(G)$.
\end{proof}

From the perspective of signed graph theory, this shows that antisymmetric line
graphs refine the line graph construction just enough to restore injectivity,
without introducing arbitrary choices or external parameters. Every graph gives rise to a signed graph in a functorial way, and classical signed
invariants—such as balance, frustration index, and signed spectra
\cite{Zaslavsky1998,BronskiDeVille2014}—therefore induce invariants of ordinary graphs when applied to $\mathcal{A}(G)$.

This perspective complements classical invariants based on adjacency,
spectrum, or expansion by exposing a layer of local structural information
that is invisible to unsigned constructions such as the line graph.

\section{The Signed Triangle Imbalance Invariant}
\label{sec:triangle}

One of the principal advantages of the antisymmetric line graph is that it
supports canonical signed-graph invariants that have no analogue at the level of
the ordinary line graph.

In this section we introduce the simplest such invariant, the \emph{signed
triangle imbalance}, and explain its combinatorial meaning.

\subsection{Definition of the Triangle Imbalance}

Let $\mathcal{A}(G)$ be the antisymmetric line graph of a graph $G$.
Since the underlying unsigned graph of $\mathcal{A}(G)$ is the line graph
$L(G)$, its triangles correspond to triples of edges in $G$ that are pairwise
adjacent.
Each such triangle carries a sign given by the product of the signs of its three
edges in $\mathcal{A}(G)$.

\begin{definition}[Signed triangle imbalance]
Let $t^+$ and $t^-$ denote the number of positive and negative triangles in
$\mathcal{A}(G)$, respectively.
The \emph{signed triangle imbalance} of $G$ is defined by
\[
\Delta_3(G) \;:=\; t^+ - t^- .
\]
\end{definition}

Since switching operations preserve the sign of every cycle, $\Delta_3(G)$ is
well-defined on the switching class of $\mathcal{A}(G)$ and therefore defines a
canonical invariant of the base graph $G$.

\subsection{Combinatorial interpretation}

Triangles in the line graph correspond to triples of edges in $G$ that are
pairwise adjacent, hence to one of two configurations: either the three edges
are incident to a common vertex (a $3$--edge star), or they form a triangle in
$G$.

\begin{theorem}[Tripods versus triangles]\label{thm:delta3-combinatorial}
Let $G$ be a finite simple graph.  Let $\Delta_3(G)=t^+-t^-$ be the signed
triangle imbalance of $\mathcal A(G)$.
Then every triangle of $\mathcal A(G)$ is positive if and only if it arises from
three edges incident to a common vertex of $G$, and every triangle of
$\mathcal A(G)$ is negative if and only if it arises from a $3$--cycle in $G$.
Consequently,
\[
t^+(G)=\sum_{v\in V(G)} \binom{d(v)}{3},\qquad
t^-(G)=\#\triangle(G),
\]
and hence
\begin{equation}\label{eq:delta3-degree-triangle}
\Delta_3(G)=\sum_{v\in V(G)} \binom{d(v)}{3}\;-\;\#\triangle(G).
\end{equation}
In particular, the total number of triangles in the line graph satisfies
\[
T(G)=t^+(G)+t^-(G)=\sum_{v\in V(G)} \binom{d(v)}{3}\;+\;\#\triangle(G).
\]
\end{theorem}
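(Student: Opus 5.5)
The plan has three stages: classify the triangles of $L(G)$, compute the sign of each type as a cycle invariant, and then count.

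\emph{Classification.} Let $\{e,f,g\}$ span a triangle in $L(G)$, so the three edges are pairwise incident in $G$. Suppose $e\cap f\cap g=\emptyset$. Write $e=xy$ and $f=xz$ with $y\neq z$ ($G$ is simple). The edge $g$ must meet $e$ but avoid $x$, so it contains $y$. It must also meet $f$ but avoid $x$, so it contains $z$. Hence $g=yz$, and $\{e,f,g\}$ is a $3$-cycle of $G$. Otherwise the three edges share a common vertex $v$, which gives a $3$-edge star at $v$. The two cases cannot occur together: a triangle has no vertex common to all three of its edges, and three edges through $v$ have three distinct other endpoints. The classification is therefore a disjoint dichotomy.

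\emph{Signs.} By Proposition~\ref{prop:switching}, the sign of a cycle in $\mathcal A(G)$ does not depend on the orientation, since switching preserves cycle signs. So for each triangle of $L(G)$ I may choose a convenient orientation locally; the sign depends only on the orientations of those three edges.
\begin{itemize}
\item For a star at $v$, orient all three edges away from $v$. Each pair then has the same incidence at $v$, so each entry $(D^{\mathsf T}D)_{ef}$ equals $+1$ and the triangle is positive. This is exactly Case~2 of the Whitney computation.
\item For a $3$-cycle of $G$, orient it cyclically. At each vertex one edge enters and one leaves, so all three entries equal $-1$ and the product is $-1$, as in Case~1.
\end{itemize}
Because the two configurations are exclusive and exhaustive, each direction of the two ``if and only if'' statements follows.

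\emph{Counting.} The map sending a triple of edges to the triangle it spans in $L(G)$ is injective. Each vertex $v$ contributes $\binom{d(v)}{3}$ stars, and distinct vertices give distinct triples, because a triple of star edges determines its common vertex. Each triangle of $G$ gives exactly one triple. This yields the formulas for $t^+$ and $t^-$, and $\Delta_3$ and $T(G)$ follow by subtraction and addition.

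The only step that needs real care is the classification. There one must use simplicity of $G$ to obtain $y\neq z$ and to force $g=yz$, and one must check that no triple of edges is counted under both types.
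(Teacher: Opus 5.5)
Your proof is correct and follows essentially the same route as the paper: classify the triangles of $L(G)$ as $3$-edge stars or $3$-cycles of $G$, fix each sign with a convenient orientation (all edges away from the center, respectively cyclic), and count. Your explicit derivation of the dichotomy from simplicity, and your check that the two types are disjoint and that the star count $\sum_v\binom{d(v)}{3}$ counts each triple once, fill in what the paper leaves as ``these are the only possibilities.''
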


\begin{proof}
A triangle in $L(G)$ corresponds to three edges of $G$ that are pairwise adjacent.
Such a triple either (i) is incident to a common vertex $v$ (contributing
$\binom{d(v)}{3}$ choices at each $v$), or (ii) forms a $3$--cycle in $G$
(contributing $\#\triangle(G)$ choices); these are the only possibilities.

For case (i), after switching if necessary, we may assume the common vertex $v$
is a tail (or head) for all three edges, so each adjacent pair has the same
incidence at $v$, giving sign $+1$ on each of the three edges of the triangle in
$\mathcal A(G)$ and hence a positive triangle.
For case (ii), the sign of a triangle in $\mathcal A(G)$ is switching-invariant,
so it suffices to compute it for a convenient reference orientation.
Orient the $3$--cycle cyclically.
At each vertex of the cycle, the two incident edges have opposite incidence
(one enters and one leaves), hence each of the three adjacencies in the induced
triangle of $\mathcal A(G)$ has sign $-1$, and the product of the three signs is
$(-1)^3=-1$.  Therefore every triangle arising from a $3$--cycle in $G$ is negative.

The stated formulas for $t^+$, $t^-$, $\Delta_3$, and $T$ follow immediately.
\end{proof}

\begin{remark}\label{rem:delta3-limits}
Equation~\eqref{eq:delta3-degree-triangle} shows that $\Delta_3(G)$ depends only
on the degree sequence of $G$ and the number of triangles in $G$.  Thus $\Delta_3$
is a genuine signed invariant, but it is intentionally low-order and
cannot be expected to separate graphs that already agree on these basic counts.
\end{remark}

\begin{theorem}[Cycle parity in $\mathcal A(G)$]
\label{thm:cycle-parity}
Let $C \subseteq G$ be a simple cycle of length $k \ge 3$.
Then the corresponding $k$-cycle in the line graph $L(G)$
has sign $(-1)^k$ in $\mathcal A(G)$.
In particular, odd cycles in $G$ lift to negative cycles
in $\mathcal A(G)$, and even cycles lift to positive cycles.
\end{theorem}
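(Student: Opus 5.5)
The plan is to compute the sign of the lifted cycle directly in one convenient reference orientation, and then use that cycle signs are switching invariants; by Proposition~\ref{prop:switching} the sign is then independent of the orientation. Write the cycle as $C=v_1v_2\cdots v_kv_1$ with edges $e_i=v_iv_{i+1}$ (indices modulo $k$). The corresponding $k$-cycle in $L(G)$ is $e_1e_2\cdots e_ke_1$: consecutive edges $e_{i-1},e_i$ share the vertex $v_i$, and since $C$ is simple, $v_i$ is their only common endpoint. Because $G$ is simple, two distinct edges meet in at most one vertex. Hence the entry $(D^{\mathsf T}D)_{e_{i-1}e_i}$ is determined entirely by the incidences at $v_i$:
\[
(D^{\mathsf T}D)_{e_{i-1}e_i}=D_{v_i e_{i-1}}D_{v_i e_i}.
\]

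The main step is to choose the cyclic orientation $v_i\to v_{i+1}$ for every $i$. Then $e_{i-1}$ enters $v_i$ and $e_i$ leaves $v_i$, so $D_{v_ie_{i-1}}=+1$ and $D_{v_ie_i}=-1$. Every one of the $k$ consecutive adjacencies therefore has sign $-1$, and the product of the signs is $(-1)^k$. This is the same computation as Case~1 of the Whitney-pair discussion and case (ii) of Theorem~\ref{thm:delta3-combinatorial}, now carried out for general $k$.

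A cleaner, orientation-free cross-check, which I would include as an alternative argument, works with an arbitrary orientation. The sign of the cycle is
\[
\prod_{i=1}^k D_{v_ie_{i-1}}D_{v_ie_i}=\prod_{i=1}^k\bigl(D_{v_ie_i}D_{v_{i+1}e_i}\bigr),
\]
obtained by regrouping the factors edge by edge. Each edge $e_i$ has one head and one tail among its two endpoints, so each factor $D_{v_ie_i}D_{v_{i+1}e_i}$ equals $-1$. This again gives $(-1)^k$, with no appeal to switching at all.

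The only point needing care is the claim that the relevant $L(G)$-cycle uses exactly the adjacencies $e_{i-1}\sim e_i$ through $v_i$. I would state this explicitly: we are computing the sign of the specified cycle in $\mathcal A(G)$, not of an induced subgraph. Chords of $L(G)$ may exist, for instance when $v_i$ has other incident edges of $C$, but these are irrelevant to the sign of this particular cycle. I do not expect any genuine obstacle beyond this bookkeeping. The final sentence of the theorem, that odd cycles lift to negative cycles and even cycles to positive ones, is immediate from $(-1)^k$.
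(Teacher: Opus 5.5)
Your main argument is the same as the paper's: orient $C$ cyclically so that each consecutive adjacency $e_{i-1}\sim e_i$ through $v_i$ has sign $-1$, then use switching invariance of cycle signs. Your edge-by-edge regrouping $\prod_i D_{v_ie_i}D_{v_{i+1}e_i}=(-1)^k$ is a correct orientation-free variant that the paper does not give, and it avoids switching altogether.

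One small correction to your bookkeeping remark: a vertex of a simple cycle lies on only two edges of $C$. So for $k\ge 4$ the lifted cycle on $\{e_1,\dots,e_k\}$ has no chords in $L(G)$, and the situation you describe cannot occur. As you say, this does not affect the sign.
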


\begin{proof}
Write $C=v_1v_2\cdots v_kv_1$ and let $e_i=v_iv_{i+1}$ (indices mod $k$) be its edges.
These $k$ edges are pairwise distinct, and consecutive edges share exactly one endpoint,
so $e_1,e_2,\dots,e_k$ form a simple $k$--cycle in the line graph $L(G)$.

The sign of a cycle is switching-invariant, so it suffices to compute it for a
convenient reference orientation of the edges of $C$.
Orient $C$ cyclically.  At each vertex of $C$, the two incident cycle edges have
opposite incidence (one enters and one leaves), hence each of the $k$ adjacencies
along the induced $k$--cycle in $\mathcal A(G)$ has sign $-1$.
Therefore the cycle sign is $(-1)^k$.
\end{proof}

\begin{corollary}[Balance and bipartiteness]
\label{cor:balanced-iff-bipartite}
$\mathcal A(G)$ is balanced if and only if $G$ is bipartite.
\end{corollary}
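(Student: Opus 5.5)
The plan is to prove the two directions separately. Both rest on Theorem~\ref{thm:cycle-parity}, together with the standard fact that a signed graph is balanced exactly when it is switching equivalent to the all-positive signing. Throughout I work with a fixed reference orientation, which is harmless by Proposition~\ref{prop:switching}, since balance is a switching invariant.

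\emph{Balanced implies bipartite.} Argue by contraposition. If $G$ is not bipartite, it contains an odd simple cycle $C$ of length $k\ge 3$. By Theorem~\ref{thm:cycle-parity}, the corresponding $k$-cycle in $L(G)$ has sign $(-1)^k=-1$ in $\mathcal A(G)$. So $\mathcal A(G)$ contains a negative cycle and is not balanced.

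\emph{Bipartite implies balanced.} Rather than checking every cycle of $L(G)$, I will exhibit an orientation whose signing is all-positive. Note that $L(G)$ has cycles, such as triangles from stars and cycles winding around several vertices, that do not come from cycles of $G$, so a cycle-by-cycle check would be awkward. Let $V(G)=X\sqcup Y$ be a bipartition and orient every edge from $X$ to $Y$. Take two edges $e,f$ sharing a vertex $v$. If $v\in X$, both edges leave $v$; if $v\in Y$, both enter $v$. In either case they have the same incidence at $v$, so $(D^{\mathsf T}D)_{ef}=+1$. Two distinct edges of a simple graph share at most one endpoint, so there is no cancellation and the entry is exactly $+1$. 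Hence every edge of $\mathcal A(G)$ is positive in this orientation, and every cycle is positive. Any other orientation gives a switching of this one by Proposition~\ref{prop:switching}, and cycle signs are switching invariant, so $\mathcal A(G)$ is balanced.

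The only step needing care is the second direction: one must avoid the incorrect shortcut of checking only cycles lifted from $G$. The explicit $X\to Y$ orientation handles this directly. The remaining details are routine, including the case of graphs with isolated vertices or several components, where the bipartition is taken componentwise.
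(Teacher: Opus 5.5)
Your proof is correct and follows the paper's argument essentially line for line. The paper also orients every edge from one side of the bipartition to the other to obtain an all-positive signing, and for the converse uses Theorem~\ref{thm:cycle-parity} to lift an odd cycle of $G$ to a negative cycle of $\mathcal A(G)$; your extra remarks on there being no cancellation in $(D^{\mathsf T}D)_{ef}$ and on switching invariance are harmless refinements.
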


\begin{proof}
Suppose $G$ is bipartite with parts $A\sqcup B$.
Orient every edge of $G$ from $A$ to $B$.
If two edges share a vertex $v\in A$ then both \emph{leave} $v$, and if they share
a vertex $v\in B$ then both \emph{enter} $v$.
Thus every adjacency in $\mathcal A(G)$ has sign $+1$ in this reference orientation,
so $\mathcal A(G)$ is (trivially) balanced.

Conversely, if $G$ contains an odd cycle of length $k$,
then the corresponding cycle in $\mathcal A(G)$
has sign $(-1)^k=-1$ by Theorem~\ref{thm:cycle-parity}.
Hence $\mathcal A(G)$ contains a negative cycle and is therefore unbalanced.
\end{proof}

\begin{proposition}[Bipartite collapse]\label{prop:bipartite-collapse}
If $G$ is bipartite, then $\mathcal A(G)$ is switching-equivalent to the ordinary line graph $L(G)$ with all edges positive. In particular, every signed triangle in $\mathcal A(G)$ is positive, so
\[
\Delta_3(G)=t^+(G)=\#\triangle(L(G))=\sum_{v\in V(G)}\binom{d(v)}{3}.
\]
\end{proposition}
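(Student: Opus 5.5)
The plan is to reuse the orientation from the proof of Corollary~\ref{cor:balanced-iff-bipartite}. Let $G$ be bipartite with bipartition $V(G)=A\sqcup B$, and orient every edge of $G$ from $A$ to $B$. Call this reference orientation $\vec G_0$ and its incidence matrix $D_0$.

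Take two distinct edges $e,f$ sharing a vertex $v$. Since $G$ is simple, they share exactly one vertex. If $v\in A$, both edges leave $v$. If $v\in B$, both edges enter $v$. In either case $(D_0^{\mathsf T}D_0)_{ef}=(\pm1)(\pm1)=+1$. Hence $A_{\mathcal A(\vec G_0)}=D_0^{\mathsf T}D_0-2I$ has every nonzero off-diagonal entry equal to $+1$. By Lemma~\ref{lem:ALG-refines-LG} its support is exactly that of $A_{L(G)}$, so $A_{\mathcal A(\vec G_0)}=A_{L(G)}$ entrywise.

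Now let $\vec G$ be any other reference orientation. By Proposition~\ref{prop:switching}, equivalently the conjugation $A_{\mathcal A(\vec G)}=Q\,A_{\mathcal A(\vec G_0)}\,Q$ in the proof of Proposition~\ref{prop:unique-signing}, $\mathcal A(\vec G)$ is obtained from the all-positive $L(G)$ by switching at the set of reversed edges. So the switching class of $\mathcal A(G)$ contains the all-positive signing of $L(G)$, which is the first claim.

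For the triangle count, cycle signs are switching invariants, so it suffices to check the all-positive representative, where every triangle is positive. Thus $t^-(G)=0$, and $\Delta_3(G)=t^+(G)$ equals the total number of triangles of the underlying graph $L(G)$. There are two ways to identify this number with $\sum_v\binom{d(v)}{3}$:
\begin{itemize}
\item By Theorem~\ref{thm:delta3-combinatorial}, $t^-(G)=\#\triangle(G)$ and $t^+(G)=\sum_v\binom{d(v)}{3}$, while $T(G)=t^++t^-$ counts all triangles of $L(G)$. A bipartite graph has no $3$-cycles, so $\#\triangle(G)=0$.
\item Directly: three pairwise adjacent edges of a triangle-free $G$ must share a common vertex, and each vertex $v$ contributes $\binom{d(v)}{3}$ such triples.
\end{itemize}

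There is no real obstacle. The only point needing care is that two distinct edges of a simple graph share at most one endpoint. This guarantees that $(D_0^{\mathsf T}D_0)_{ef}$ is the single product of incidence signs at $v$, rather than a sum of two terms.
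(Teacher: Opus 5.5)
Your proof is correct and follows the paper's argument. Like the paper, you orient every edge from $A$ to $B$ so that every adjacency of $\mathcal A(G)$ is positive, and then use that three pairwise adjacent edges of a triangle-free $G$ must share a common vertex; your explicit entry computation and the alternative route through Theorem~\ref{thm:delta3-combinatorial} with $\#\triangle(G)=0$ are valid extra detail, not a different method.
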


\begin{proof}
Let $G$ be bipartite with parts $A\sqcup B$ and orient every edge from $A$ to $B$.
If two edges of $G$ share a vertex $v\in A$ then both \emph{leave} $v$, and if they share a
vertex $v\in B$ then both \emph{enter} $v$.
Hence every adjacency in $\mathcal A(G)$ has sign $+1$ in this reference orientation, so
$\mathcal A(G)$ is switching-equivalent to the all-positive signing of $L(G)$.
In particular, every triangle in $\mathcal A(G)$ is positive, and thus $\Delta_3(G)=t^+(G)=\#\triangle(L(G))$.
For bipartite $G$, every triangle of $L(G)$ corresponds to three edges incident to a common vertex of $G$,
so $\#\triangle(L(G))=\sum_v \binom{d(v)}{3}$.
\end{proof}

\subsection{Normalization and Comparison with Classical Invariants}

To compare graphs of different sizes, it is often useful to consider a normalized
form of the triangle imbalance.
Let $T(G)$ denote the total number of triangles in $L(G)$.
We define the normalized imbalance
\[
\tau_3(G) \;:=\; \frac{\Delta_3(G)}{T(G)},
\]
whenever $T(G)>0$.

The invariant $\tau_3(G)$ should be contrasted with classical graph invariants
such as triangle density, clustering coefficient, or spectral gap
\cite{Chung1997,West2001}. 
While these invariants capture global density or expansion properties,
$\tau_3(G)$ captures a purely local signed coherence phenomenon that is invisible
to unsigned constructions.

From the perspective of signed graph theory, $\Delta_3(G)$ may also be viewed as
a low-order instance of more general cycle-balance statistics studied in the
theory of signed graphs and gain graphs \cite{Zaslavsky1982,Zaslavsky1998}.
The novelty here lies in the fact that the signing is canonical and derived from
an unsigned graph, rather than imposed externally.

Because of this, any invariant of signed graphs that is stable under switching gives rise to an invariant of ordinary graphs when applied to $\mathcal{A}(G)$.
The triangle imbalance provides the simplest nontrivial example of this
principle. The resulting invariant is closely related in spirit to classical notions of frustration and imbalance in signed graphs.

$\Delta_3(G)$ is not intended as a universal descriptor of graph
structure, nor does it subsume classical notions such as expansion or
Hamiltonicity.
Rather, it exposes a layer of local structural information—namely, the signed
coherence of edge adjacencies—that is erased by the ordinary line graph.
As such, it illustrates the broader potential of antisymmetric line graphs as a
bridge between graph theory and signed graph invariants.

\section{Computational illustrations}
\label{sec:empirical}

\paragraph{Code availability.}
Python code reproducing the computations in this section is available at
\url{https://github.com/hsbal/antisymmetric-line-graph}.

The purpose of this section is only illustrative.
We record a small sample showing that signed invariants of $\mathcal A(G)$ can
separate some graphs not distinguished by unsigned line-graph data, while also
making clear that the low-order invariant $\Delta_3$ is limited.

Let $S=A_{\mathcal A(G)}$ and $U:=|S|=A_{L(G)}$.
We use the exact trace formulas
\[
\Delta_3(G)=\frac{\operatorname{tr}(S^3)}{6},
\qquad
T(G)=\frac{\operatorname{tr}(U^3)}{6},
\]
together with
\[
\tau_3(G)=\frac{\Delta_3(G)}{T(G)}
\quad (T(G)>0),
\qquad
\mathrm{Tr}(G)=\frac{\operatorname{tr}(S^4)}{|E(G)|^2},
\]
and the signed spectral data of $S$.

\medskip
\noindent\textbf{Example 1 (separation by signed spectrum).}
Let
\[
\begin{aligned}
E(G_1)&=\{(0,1),(0,2),(0,4),(0,5),(1,2),(1,5),(2,3),(2,5),(2,6),(3,4)\},\\
E(G_2)&=\{(0,1),(0,2),(0,3),(0,4),(0,5),(1,2),(2,3),(2,5),(3,4),(4,6)\}.
\end{aligned}
\]
These graphs are non-isomorphic, have the same degree sequence
$(5,4,3,3,2,2,1)$, and have cospectral line graphs.
However,
\[
\operatorname{inertia}(S_{G_1})=(4,6,0),\qquad
\operatorname{inertia}(S_{G_2})=(4,5,1),
\]
so the signed spectra of $\mathcal A(G_1)$ and $\mathcal A(G_2)$ differ.
Here
\[
T=20,\qquad \Delta_3=12,\qquad \tau_3=0.6,
\]
for both graphs, so the separation is genuinely spectral rather than low-order.

\medskip
\noindent\textbf{Example 2 (failure of signed spectral separation).}
Let
\[
\begin{aligned}
E(H_1)&=\{(0,1),(0,2),(0,5),(1,4),(2,3),(3,5),(3,6)\},\\
E(H_2)&=\{(0,3),(0,4),(1,2),(1,3),(1,4),(2,5),(2,6)\}.
\end{aligned}
\]
These graphs are non-isomorphic, have the same degree sequence
$(3,3,2,2,2,1,1)$, and have cospectral line graphs.
In this case the signed spectra also coincide:
\[
\operatorname{inertia}(S_{H_1})=\operatorname{inertia}(S_{H_2})=(3,3,1),
\]
and again
\[
T=2,\qquad \Delta_3=2,\qquad \tau_3=1.
\]

\medskip
\noindent\textbf{Small atlas sample.}
Among the non-bipartite connected graphs with no isolated vertices on at most
seven vertices in the NetworkX graph atlas, there are eight pairs sharing the
same number of vertices and edges, the same degree sequence, and the same
adjacency spectrum of the line graph.
Within this sample, $\Delta_3$ separates none of the eight pairs, while the
signed spectrum of $\mathcal A(G)$ separates two.
Thus the signed spectrum is strictly stronger than these low-order unsigned data,
but it is not complete.

These examples should be read only as illustrations.
They support two modest conclusions: first, the canonical signed refinement can
detect structure invisible to the ordinary line graph; second, the low-order
triangle imbalance is inherently limited, as already indicated by
Remark~\ref{rem:delta3-limits}.

\section{Frustration index, Max-Cut defect, and cubic exactness}\label{sec:frustration-maxcut}

A central switching-invariant of a signed graph $\Sigma$ is its \emph{frustration index}
$\ell(\Sigma)$, defined as the minimum number of edges whose deletion makes $\Sigma$
balanced. Equivalently, by Harary's switching characterization, it is the minimum possible
number of negative edges among all switchings of $\Sigma$.
We refer to \cite{Zaslavsky1982,ArefMasonWilson2016,ArefWilson2020} for background, algorithmic
formulations, and computational aspects.

Applied to $\Sigma=\mathcal A(G)$, the quantity $\ell(\mathcal A(G))$ becomes an integer invariant of $G$ measuring how far the antisymmetric line graph is from being switching-equivalent to the all-positive line graph. Since $\mathcal A(G)$ is balanced if and only if $G$ is bipartite (Corollary~\ref{cor:balanced-iff-bipartite}), $\ell(\mathcal A(G))$ may be viewed as a ``distance from bipartiteness'' measured through the signed incidence refinement.

The edge-space viewpoint introduced in Section~\ref{sec:laplacian-geometry}
shows that the same signed operator also carries the transported Laplacian modes
and the canonical cut/cycle decomposition of edge space.
The results of the present section are of a different nature: they use the
switching class of $\mathcal A(G)$ to recover discrete graph-theoretic measures
of non-bipartiteness.

\medskip

\begin{lemma}[Orientation characterization]\label{lem:orientation-frustration}
For any finite simple graph $G$, one has
\[
\ell(\mathcal A(G)) \;=\; \min_{\vec{G}} \;\sum_{v\in V(G)} d^+_{\vec G}(v)\,d^-_{\vec G}(v),
\]
where the minimum is taken over all reference orientations $\vec G$ of $G$.
\end{lemma}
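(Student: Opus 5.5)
The plan is to combine Harary's characterization of frustration, recalled at the start of this section, with Proposition~\ref{prop:switching}. By Harary, $\ell(\mathcal A(G))$ is the minimum number of negative edges over all signed graphs in the switching class of $\mathcal A(G)$. By Proposition~\ref{prop:switching} and Corollary~\ref{cor:switching-class-size}, the map from reference orientations $\vec G$ to members of this switching class is surjective: every switching of $\mathcal A(\vec G)$ at a set $F\subseteq E(G)$ is realized as $\mathcal A(\vec G')$, where $\vec G'$ reverses exactly the edges of $F$. Hence
\[
\ell(\mathcal A(G))=\min_{\vec G}\,\#\{\text{negative edges of }\mathcal A(\vec G)\},
\]
and it remains to count the negative edges of $\mathcal A(\vec G)$ for a fixed orientation.

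For the count, I use the sign rule following Definition~\ref{def:ALG}. Two distinct edges $e,f$ sharing an endpoint $v$ give entry $(D^{\mathsf T}D)_{ef}=D_{ve}D_{vf}$. This entry is $-1$ exactly when one of $e,f$ enters $v$ and the other leaves $v$.

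The one point that needs care is that each adjacency of $L(G)$ is counted exactly once. Since $G$ is simple, two distinct edges share at most one endpoint. So the edges of $L(G)$ are partitioned according to the common vertex $v$, with each pair $\{e,f\}$ assigned to the unique $v\in e\cap f$.

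At a fixed vertex $v$, the pairs of incident edges with one entering and one leaving number exactly $d^+_{\vec G}(v)\,d^-_{\vec G}(v)$. Summing over $v$ gives
\[
\#\{\text{negative edges of }\mathcal A(\vec G)\}=\sum_{v\in V(G)} d^+_{\vec G}(v)\,d^-_{\vec G}(v).
\]
Minimizing over $\vec G$ then yields the lemma.

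There is no real obstacle here. The only subtle points are using surjectivity of orientations onto the switching class, so that the minimum over orientations equals the minimum over switchings, and using simplicity of $G$ so that no $L(G)$-edge is counted at two vertices.
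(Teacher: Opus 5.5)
Your proposal is correct and follows the same route as the paper's proof: for a fixed orientation, the negative edges of $\mathcal A(\vec G)$ are exactly the directed $2$-paths, they number $d^+_{\vec G}(v)\,d^-_{\vec G}(v)$ at each vertex, and Proposition~\ref{prop:switching} turns minimizing over switchings into minimizing over orientations. You are slightly more explicit than the paper on two points: surjectivity of orientations onto the switching class, and simplicity of $G$ guaranteeing that each adjacency of $L(G)$ is counted at exactly one vertex.
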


\begin{proof}
Fix an orientation $\vec G$ of $G$.
An edge of $\mathcal A(G)$ corresponds to a pair of adjacent edges $e,f$ of $G$
sharing a unique vertex $v$.
By the defining sign rule for $\mathcal A(G)$, this signed edge is negative if and
only if $e$ and $f$ have opposite incidence at $v$, i.e.\ one enters $v$ and the
other leaves.
Equivalently, negative edges of $\mathcal A(G)$ are exactly directed $2$-paths in
the orientation $\vec G$.

The number of such pairs through a fixed vertex $v$ is exactly
$d^+_{\vec G}(v)d^-_{\vec G}(v)$, and summing over $v$ counts each signed
adjacency once.
Switching $\mathcal A(G)$ corresponds to reorienting individual edges of $G$
(Proposition~\ref{prop:switching}), so minimizing the number of negative edges
over the switching class is equivalent to minimizing over orientations of $G$.
\end{proof}
\medskip

Lemma~\ref{lem:orientation-frustration} admits a direct combinatorial
interpretation: the quantity $d^+_{\vec G}(v)d^-_{\vec G}(v)$ is exactly the
number of directed $2$-paths through $v$ in the orientation $\vec G$.
Thus $\ell(\mathcal A(G))$ measures the minimum possible number of transitive
local turns over all orientations of $G$.

It is natural to compare $\ell(\mathcal A(G))$ with the \emph{Max-Cut defect}
\[
\mathrm{def}(G):=|E(G)|-\mathrm{MaxCut}(G),
\]
which is the minimum number of edges whose removal makes $G$ bipartite
(equivalently, the minimum number of edges violating a maximum bipartition).

\begin{theorem}[Lower bound via Max-Cut defect]\label{thm:frustration-lower}
For any finite simple graph $G$, one has
\[
\ell(\mathcal A(G)) \;\ge\; \mathrm{def}(G).
\]
\end{theorem}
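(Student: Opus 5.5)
By Lemma~\ref{lem:orientation-frustration} it suffices to show that every orientation $\vec G$ of $G$ satisfies
\[
\sum_{v\in V(G)} d^+_{\vec G}(v)\,d^-_{\vec G}(v)\;\ge\;\mathrm{def}(G).
\]
The plan is to extract from $\vec G$ a bipartition of $V(G)$ whose number of monochromatic (uncut) edges is at most this sum. Since $\mathrm{def}(G)$ is the minimum number of uncut edges over all bipartitions, the inequality then follows by taking the minimum over orientations.

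\textbf{Step 1: the bipartition.} Let $X$ be the set of vertices $v$ with $d^+_{\vec G}(v)\ge d^-_{\vec G}(v)$, the ``mostly sources'', and let $Y=V\setminus X$. Each directed edge $u\to w$ is naturally a cut edge if $u$ behaves as a source and $w$ as a sink.

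\textbf{Step 2: charging uncut edges.} Call the incidence of $e$ at $v$ \emph{minority} if $e$ leaves $v$ with $v\in Y$, or enters $v$ with $v\in X$. Consider an edge $u\to w$ that is not cut from $X$ to $Y$. Then either $u\in Y$, so $e$ is a minority out-edge at $u$, or $w\in X$, so $e$ is a minority in-edge at $w$. Hence every uncut edge, and more generally every edge not directed from $X$ to $Y$, has at least one minority endpoint. The number of such edges is therefore at most the total number of minority incidences, which is
\[
\sum_{v}\min\bigl(d^+_{\vec G}(v),\,d^-_{\vec G}(v)\bigr).
\]

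\textbf{Step 3: comparison.} For nonnegative integers $a,b$ we have $\min(a,b)\le ab$. Indeed, if $\min(a,b)=0$ both sides vanish, and otherwise $ab\ge \max(a,b)\ge\min(a,b)$. Combining Steps 2 and 3 gives
\[
\mathrm{def}(G)\;\le\;\#\{\text{uncut edges of }(X,Y)\}\;\le\;\sum_v d^+_{\vec G}(v)\,d^-_{\vec G}(v),
\]
and minimizing over $\vec G$ completes the argument.

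The main point needing care is the choice of bipartition in Step 1 together with the claim in Step 2 that each uncut edge has a minority endpoint. The rule ``more out than in'' makes this work cleanly, with ties sent to $X$; everything after that is the elementary inequality of Step 3. This argument in fact gives the stronger bound $\mathrm{def}(G)\le \min_{\vec G}\sum_v\min(d^+,d^-)$.
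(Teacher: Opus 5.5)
Your proof is correct and is essentially the paper's argument. The paper fixes an optimal orientation, deletes at each vertex the $\min(d^+(v),d^-(v))$ edges in the minority direction, and observes that what remains is bipartite with sources on one side and sinks on the other. You instead name that same bipartition $(X,Y)$ up front and charge the edges not directed $X\to Y$ to minority incidences, and both proofs then finish with $\min(a,b)\le ab$.
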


\begin{proof}
Let $\vec G$ be an orientation achieving Lemma~\ref{lem:orientation-frustration},
so that
\[
\ell(\mathcal A(G))=\sum_v d^+(v)d^-(v).
\]
For each vertex $v$, set
\[
m(v):=\min(d^+(v),d^-(v)).
\]
Since $d^+(v),d^-(v)\in\mathbb Z_{\ge 0}$, one has
\[
m(v)\le d^+(v)d^-(v)
\]
for every $v$: this is trivial if $\min(d^+(v),d^-(v))=0$, and otherwise both
$d^+(v)$ and $d^-(v)$ are at least $1$, so
$\min(d^+(v),d^-(v))\le d^+(v)d^-(v)$.

For each vertex $v$, choose a set $F_v$ consisting of exactly the $m(v)$ edges
incident to $v$ that lie in the minority direction at $v$, and let
\[
F:=\bigcup_{v\in V(G)}F_v.
\]
Since each edge of $F$ lies in the minority direction at at least one endpoint,
each edge of $F$ contributes at least one unit to \(\sum_v m(v)\) (and possibly
two, if it lies in the minority direction at both endpoints). Therefore
\[
|F|\le \sum_v m(v)\le \sum_v d^+(v)d^-(v)=\ell(\mathcal A(G)).
\]

In the remaining oriented graph $H:=G\setminus F$, every vertex has all its
remaining incident edges oriented in the same direction, so every vertex of $H$
is either a source or a sink.
Hence $H$ is bipartite, with bipartition given by its sources and sinks.
Therefore
\[
\mathrm{def}(G)\le |F|\le \ell(\mathcal A(G)),
\]
as claimed.
\end{proof}

\begin{proposition}[Minimum directed $2$-path formulation]
\label{prop:min-transit}
For any finite simple graph $G$, the edge-frustration index $\ell(\mathcal A(G))$
is exactly the minimum number of directed $2$-paths over all orientations of $G$.
Equivalently,
\[
\ell(\mathcal A(G))
=
\min_{\vec G}
\#\bigl\{(e_1,e_2,v): e_1,e_2\in E(G)\ \text{meet at}\ v,\ e_1\ \text{enters}\ v,\ e_2\ \text{leaves}\ v\bigr\}.
\]
\end{proposition}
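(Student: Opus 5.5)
This is essentially a restatement of Lemma~\ref{lem:orientation-frustration}, so the plan is to reduce to it by a double-counting identity that holds for each fixed orientation. Fix a reference orientation $\vec G$. The goal is to show that the set of triples $(e_1,e_2,v)$ in the display is in bijection with the pairs of edges of $\mathcal A(G)$ that carry sign $-1$ in the signing induced by $\vec G$. Since both quantities are computed orientation by orientation, it then suffices to take the minimum over $\vec G$ on both sides and invoke Lemma~\ref{lem:orientation-frustration}.

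The key steps, in order, are as follows.

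\begin{enumerate}
\item \textbf{Count triples at a vertex.} For a fixed vertex $v$, the triples $(e_1,e_2,v)$ with $e_1$ entering $v$ and $e_2$ leaving $v$ are counted by choosing $e_1$ among the $d^-_{\vec G}(v)$ in-edges and $e_2$ among the $d^+_{\vec G}(v)$ out-edges. This gives $d^+_{\vec G}(v)\,d^-_{\vec G}(v)$ triples at $v$.
\item \textbf{Rule out double counting.} Since $e_1$ enters and $e_2$ leaves, we have $e_1\ne e_2$. The ordered pair is not double counted, because the roles of entering and leaving fix the order.
\item \textbf{Identify the meeting vertex.} Since $G$ is simple, two distinct edges meet in at most one vertex. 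So $v$ is determined by the unordered pair $\{e_1,e_2\}$.
\item \textbf{Match with negative edges.} By the sign rule following Lemma~\ref{lem:ALG-refines-LG}, the unordered pair $\{e_1,e_2\}$ is an adjacency of $\mathcal A(G)$, and it is negative exactly when one edge enters and the other leaves their common vertex.
\end{enumerate}

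Steps 2--4 together give a bijection between the triples and the negative edges of $\mathcal A(\vec G)$. Hence the count in the display equals $\sum_v d^+_{\vec G}(v)d^-_{\vec G}(v)$, which is the number of negative edges of $\mathcal A(\vec G)$.

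To conclude, minimize over $\vec G$. By Proposition~\ref{prop:switching} and Corollary~\ref{cor:switching-class-size}, the signed graphs $\mathcal A(\vec G)$ range over the entire switching class, and by Harary's characterization $\ell$ equals the minimum number of negative edges over that class. Lemma~\ref{lem:orientation-frustration} already packages this step.

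The only point needing care is the bookkeeping in steps 2 and 3: a directed $2$-path must correspond to exactly one unordered negative adjacency, with no factor of $2$. Simplicity of $G$ is what excludes two edges meeting at both endpoints. I do not expect any genuine obstacle beyond this.
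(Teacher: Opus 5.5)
Your proposal is correct and follows essentially the same route as the paper: fix an orientation, identify the negative edges of $\mathcal A(\vec G)$ with directed $2$-paths (counted as $d^+_{\vec G}(v)\,d^-_{\vec G}(v)$ at each vertex $v$), and then minimize over orientations via Lemma~\ref{lem:orientation-frustration}. Your explicit bookkeeping (the fixed order of each pair, and uniqueness of the meeting vertex by simplicity) spells out the bijection that the paper's short proof leaves implicit.
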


\begin{proof}
For a fixed orientation $\vec G$, each negative edge of $\mathcal A(G)$ corresponds
to a pair of adjacent edges of $G$ sharing a vertex $v$ with opposite incidence at
$v$, i.e.\ to a directed $2$-path through $v$.
By Lemma~\ref{lem:orientation-frustration}, the number of such pairs at $v$ is
exactly $d^+(v)d^-(v)$, and summing over $v$ counts all negative edges of
$\mathcal A(G)$.
Minimizing over orientations gives the claim.
\end{proof}

\begin{proposition}[Odd-cycle packing lower bound]
\label{prop:odd-packing-lower}
Let $\nu_{\mathrm{odd}}(G)$ denote the maximum number of edge-disjoint odd cycles
in $G$.
Then
\[
\ell(\mathcal A(G))\ge \nu_{\mathrm{odd}}(G).
\]
In particular, if $\nu_\triangle(G)$ denotes the maximum number of edge-disjoint
triangles in $G$, then
\[
\ell(\mathcal A(G))\ge \nu_\triangle(G).
\]
\end{proposition}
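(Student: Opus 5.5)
The quickest route is through Theorem~\ref{thm:frustration-lower}: we already know $\ell(\mathcal A(G))\ge \mathrm{def}(G)$, so it suffices to show $\mathrm{def}(G)\ge \nu_{\mathrm{odd}}(G)$. This inequality is elementary. Take any bipartition $V=X\sqcup Y$ and a family $C_1,\dots,C_r$ of pairwise edge-disjoint odd cycles. An odd cycle cannot be properly $2$-coloured, so each $C_i$ contains at least one edge with both endpoints on the same side. Such an edge is not cut by the bipartition. Because the $C_i$ are edge-disjoint, these uncut edges are distinct, so the bipartition leaves at least $r$ edges uncut. Applying this to a maximum cut and a maximum packing gives $|E(G)|-\mathrm{MaxCut}(G)\ge \nu_{\mathrm{odd}}(G)$.

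For robustness I would also record a direct argument in the language of $\mathcal A(G)$. By Theorem~\ref{thm:cycle-parity}, each odd cycle $C_i$ of $G$ lifts to a negative cycle $\widetilde C_i$ of length $|C_i|$ in $\mathcal A(G)$. Its vertices are the edges of $C_i$, and its edges are the consecutive pairs of edges of $C_i$, i.e.\ the $2$-paths of $G$ along $C_i$. The key check is that these lifts are edge-disjoint in $\mathcal A(G)$. An edge of $\widetilde C_i$ is an unordered pair $\{e,f\}$ with $e,f\in E(C_i)$. If the same pair appeared in $\widetilde C_j$ with $j\ne i$, then $e$ would belong to both $C_i$ and $C_j$, contradicting edge-disjointness. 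So $\widetilde C_1,\dots,\widetilde C_r$ are edge-disjoint negative cycles. Any set of edges whose deletion balances $\mathcal A(G)$ must meet every negative cycle, so it has size at least $r$. This gives $\ell(\mathcal A(G))\ge\nu_{\mathrm{odd}}(G)$ directly from the definition of the frustration index.

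The triangle statement follows at once, since triangles are odd cycles and so $\nu_\triangle(G)\le\nu_{\mathrm{odd}}(G)$. Nothing here is a genuine obstacle. The one point needing care is the edge-disjointness transfer in the direct argument: edge-disjoint cycles in $G$ yield cycles in $\mathcal A(G)$ that are in fact vertex-disjoint, which is stronger than needed.
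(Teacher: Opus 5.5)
Your proposal is correct. Your second argument is essentially the paper's proof: lift each odd cycle to a negative cycle of $\mathcal A(G)$ via Theorem~\ref{thm:cycle-parity}, note that edge-disjoint cycles of $G$ lift to vertex-disjoint cycles of $\mathcal A(G)$, and conclude that any balancing deletion set must use a distinct edge on each lift. Your phrasing in terms of edge-disjointness of the lifts is slightly cleaner than the paper's ``an edge can meet at most one member of a vertex-disjoint family''. What is actually needed is only that no edge of $\mathcal A(G)$ lies on two of the lifted cycles.

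Your primary route is genuinely different. You factor the bound as $\ell(\mathcal A(G))\ge \mathrm{def}(G)\ge \nu_{\mathrm{odd}}(G)$. The first step is Theorem~\ref{thm:frustration-lower}, which precedes this proposition, so there is no circularity. The second step is the elementary fact that every bipartition leaves an uncut edge on each odd cycle. This buys more than a proof: it shows the packing bound is always subsumed by the defect bound already established, so it never improves on $\mathrm{def}(G)$ as a lower bound for $\ell(\mathcal A(G))$.

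The paper's direct argument has its own advantages. It uses only the edge-deletion definition of $\ell$ and cycle parity. It does not need the orientation characterization (Lemma~\ref{lem:orientation-frustration}), and hence Harary's switching characterization, on which Theorem~\ref{thm:frustration-lower} rests. It is also the standard packing argument, which works verbatim for any signed graph and any family of edge-disjoint negative cycles.
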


\begin{proof}
Every odd cycle in $G$ lifts to a negative cycle in $\mathcal A(G)$ by
Theorem~\ref{thm:cycle-parity}.
If a family of odd cycles in $G$ is edge-disjoint, then the corresponding lifted
cycles in $\mathcal A(G)$ are vertex-disjoint, since vertices of $\mathcal A(G)$
are edges of $G$.
Any edge-deletion set in $\mathcal A(G)$ that balances the signed graph must meet
every negative cycle, and an edge of $\mathcal A(G)$ can meet at most one member
of a vertex-disjoint family of cycles.
Therefore at least one deleted edge is needed for each cycle in such a family,
and hence
\[
\ell(\mathcal A(G))\ge \nu_{\mathrm{odd}}(G).
\]
The triangle-packing bound is the special case obtained by restricting to odd
cycles of length $3$.
\end{proof}

The preceding proposition gives a purely combinatorial reading of
$\ell(\mathcal A(G))$: it is the minimum number of directed $2$-paths that must
remain in any orientation of $G$.
The odd-cycle packing bound shows that this turn-level obstruction already
dominates a natural packing parameter of odd cycles.
One may also bound $\ell(\mathcal A(G))$ from above in terms of a maximum cut and
the degrees of its defect edges.

\begin{definition}[Vertex frustration number]\label{def:vertex-frustration}
Let $\Sigma$ be a signed graph. Its \emph{vertex frustration number} (also called the
\emph{frustration number}) is
\[
vf(\Sigma)\;:=\;\min\{|S|: S\subseteq V(\Sigma)\ \text{and}\ \Sigma-S\ \text{is balanced}\}.
\]
\end{definition}

\begin{theorem}[Vertex frustration equals Max-Cut defect]\label{thm:vf-equals-def}
For any finite simple graph $G$, one has
\[
vf(\mathcal A(G)) \;=\; \mathrm{def}(G).
\]
\end{theorem}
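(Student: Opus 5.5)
We prove the two inequalities $vf(\mathcal A(G))\le \mathrm{def}(G)$ and $vf(\mathcal A(G))\ge \mathrm{def}(G)$ separately. Both rest on one translation: a vertex set $S\subseteq V(\mathcal A(G))$ is a set of edges of $G$. The induced signed subgraph $\mathcal A(G)-S$ is exactly the antisymmetric line graph of the spanning subgraph $G\setminus S$. This holds because $\mathcal A(G)-S$ has vertex set $E(G)\setminus S$, and its signed adjacencies are the corresponding entries of $D^{\mathsf T}D-2I$. Those entries are computed column by column and are therefore unchanged when the columns indexed by $S$ are deleted, which is precisely the oriented incidence matrix of $G\setminus S$ under the restricted orientation. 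Deleting $S$ may create isolated vertices in $G\setminus S$, but they contribute nothing to the edge-indexed object.

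For the upper bound, let $F$ be a minimum edge set with $G\setminus F$ bipartite, so $|F|=\mathrm{def}(G)$. By the translation above, $\mathcal A(G)-F=\mathcal A(G\setminus F)$. Corollary~\ref{cor:balanced-iff-bipartite} then says this signed graph is balanced, so $vf(\mathcal A(G))\le |F|=\mathrm{def}(G)$. Alternatively, one can use the explicit orientation from the proof of Proposition~\ref{prop:bipartite-collapse}, directing every edge from one side of the bipartition to the other, which makes every remaining adjacency positive.

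For the lower bound, let $S\subseteq E(G)$ be a minimum vertex set with $\mathcal A(G)-S$ balanced. By the translation, $\mathcal A(G\setminus S)$ is balanced. Corollary~\ref{cor:balanced-iff-bipartite} is stated for arbitrary finite simple graphs, so it gives that $G\setminus S$ is bipartite. Hence $\mathrm{def}(G)\le |S|=vf(\mathcal A(G))$. If one prefers not to rely on the corollary in this direction, Theorem~\ref{thm:cycle-parity} gives it directly: an odd cycle surviving in $G\setminus S$ would lift to a negative cycle in $\mathcal A(G)-S$.

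The only step needing real care is the identification of $\mathcal A(G)-S$ with $\mathcal A(G\setminus S)$ as signed graphs, not merely up to switching. I would state it as a short lemma before the main argument. It should note that the sign of each adjacency depends only on the two edges involved and their orientations at their common endpoint, so restricting a reference orientation of $G$ to $G\setminus S$ gives exactly the induced signs. Balance is switching-invariant, so the choice of orientation is irrelevant anyway. With that lemma in place, both inequalities are one line each.
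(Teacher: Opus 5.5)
Your proposal is correct and follows essentially the same route as the paper: identify $\mathcal A(G)-S$ with $\mathcal A(G\setminus S)$ by deleting the rows and columns of $D^{\mathsf T}D-2I$ indexed by $S$, then apply Corollary~\ref{cor:balanced-iff-bipartite} to reduce $vf(\mathcal A(G))$ to the minimum size of an edge set whose removal makes $G$ bipartite, which is $\mathrm{def}(G)$. Splitting this into two inequalities is just an unpacked version of the paper's single chain of equivalences.
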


\begin{proof}
A vertex of $\mathcal A(G)$ is an edge of $G$.
Thus, for any set $S\subseteq V(\mathcal A(G))=E(G)$, deleting $S$ from
$\mathcal A(G)$ removes exactly the rows and columns indexed by those edges from
the matrix $D^{\mathsf T}D-2I$.
But this is precisely the matrix obtained from the incidence matrix of the
edge-deleted graph $G\setminus S$.
Hence
\[
\mathcal A(G)-S \;\cong\; \mathcal A(G\setminus S).
\]

By Corollary~\ref{cor:balanced-iff-bipartite}, $\mathcal A(H)$ is balanced if
and only if $H$ is bipartite.
Therefore
\[
\mathcal A(G)-S\ \text{is balanced}
\quad\Longleftrightarrow\quad
\mathcal A(G\setminus S)\ \text{is balanced}
\quad\Longleftrightarrow\quad
G\setminus S\ \text{is bipartite}.
\]
Consequently,
\[
vf(\mathcal A(G))
=
\min\{|S|:S\subseteq E(G)\ \text{and}\ G\setminus S\ \text{is bipartite}\}.
\]
The right-hand side is exactly the Max-Cut defect $\mathrm{def}(G)$, since
\[
\mathrm{def}(G)=|E(G)|-\mathrm{MaxCut}(G)
\]
equals the minimum number of edges that must be deleted to make $G$ bipartite.
Hence
\[
vf(\mathcal A(G))=\mathrm{def}(G).
\]
\end{proof}

\begin{theorem}[Upper bound via defect amplification]\label{thm:frustration-upper}
Let $V(G)=A\sqcup B$ be a maximum cut, and let $E_{\mathrm{def}}$ be the set of defect edges
(those with both endpoints in $A$ or both in $B$), so that $|E_{\mathrm{def}}|=\mathrm{def}(G)$.
Then
\[
\ell(\mathcal A(G)) \;\le\; \sum_{uv\in E_{\mathrm{def}}} \min(d(u)-1,d(v)-1).
\]
In particular,
\[
\ell(\mathcal A(G))\le (\Delta(G)-1)\,\mathrm{def}(G).
\]
\end{theorem}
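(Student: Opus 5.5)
The plan is to exhibit a single orientation $\vec G$ whose directed $2$-path count is at most the right-hand side, and then apply Lemma~\ref{lem:orientation-frustration}, which says $\ell(\mathcal A(G))\le \sum_v d^+_{\vec G}(v)\,d^-_{\vec G}(v)$ for every orientation. The orientation is built from the bipartite structure of the cut $A\sqcup B$, as in the proof of Corollary~\ref{cor:balanced-iff-bipartite}.

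\emph{Cut edges.} Orient every edge between $A$ and $B$ from $A$ to $B$. In the default pattern, each vertex of $A$ has all its edges leaving and each vertex of $B$ has all its edges entering.

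\emph{Defect edges.} For each defect edge $uv\in E_{\mathrm{def}}$, choose an endpoint $w(uv)\in\{u,v\}$ of smaller degree, so that $d(w)-1=\min(d(u)-1,d(v)-1)$; ties are broken arbitrarily. Orient the edge so that it is \emph{minority} at $w$ and \emph{majority} at the other endpoint.
\begin{itemize}
\item If $uv\subseteq A$ and $w=v$, orient $u\to v$. The edge then leaves $u$, as all edges at $A$-vertices do by default, and enters $v$.
\item If $uv\subseteq B$, orient the edge out of $w$ and into the other endpoint. It therefore enters the non-$w$ endpoint, matching the $B$-default.
\end{itemize}
Thus every edge agrees with the default pattern at every endpoint, except that each defect edge $e$ disagrees at exactly one endpoint, namely $w(e)$.

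\emph{Counting.} For a vertex $x$, let $k_x$ be the number of defect edges $e$ with $w(e)=x$. By the construction, the minority side at $x$ consists of exactly these $k_x$ edges, so
\[
d^+(x)\,d^-(x)=k_x\bigl(d(x)-k_x\bigr).
\]
If $k_x=0$ this is $0$. If $k_x\ge 1$ it is at most $k_x\bigl(d(x)-1\bigr)$. In both cases
\[
d^+(x)\,d^-(x)\le \sum_{e:\,w(e)=x}\bigl(d(x)-1\bigr).
\]
Summing over $x$ regroups the right-hand side by defect edges. Each defect edge is counted exactly once, at its chosen endpoint $w(e)$, contributing $d(w(e))-1=\min(d(u)-1,d(v)-1)$. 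This gives the first bound.

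\emph{The ``in particular'' clause.} Each term satisfies $\min(d(u)-1,d(v)-1)\le \Delta(G)-1$, and $|E_{\mathrm{def}}|=\mathrm{def}(G)$ because the cut is maximum.

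The only delicate point is the bookkeeping when several defect edges share the same chosen endpoint, since the products $d^+d^-$ are not additive over edges. The inequality $k(d-k)\le k(d-1)$ for $k\ge1$ handles this. The argument never uses maximality of the cut except to identify $|E_{\mathrm{def}}|$ with $\mathrm{def}(G)$.
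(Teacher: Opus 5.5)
Your proposal is correct and is essentially the paper's argument. You orient cut edges from $A$ to $B$, make each defect edge the minority edge at its smaller-degree endpoint, bound $k(d-k)\le k(d-1)$, and regroup the sum by defect edges; your uniform minority count $k_x$ for vertices of both $A$ and $B$ simply packages the paper's ``symmetric'' treatment of $B$ into one step.
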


\begin{proof}
Orient all cut edges from $A$ to $B$. For each defect edge $uv\subseteq A$, orient it toward
the endpoint of smaller degree; for each defect edge $uv\subseteq B$, orient it away from the
endpoint of smaller degree. Let $k^-(v)$ denote the number of defect edges entering $v\in A$.
Then $d^-(v)=k^-(v)$ and $d^+(v)=d(v)-k^-(v)$, so
\[
d^+(v)d^-(v)=k^-(v)\bigl(d(v)-k^-(v)\bigr)\le k^-(v)\,(d(v)-1).
\]
Summing over $v\in A$, each defect edge $uv\subseteq A$ oriented as $u\to v$ contributes
exactly $1$ to $k^-(v)$ and $0$ to $k^-(u)$, hence the total contribution from $A$ is at most
$\sum_{uv\subseteq A}\min(d(u)-1,d(v)-1)$. The argument for $B$ is symmetric (using outgoing
defect edges). Summing both sides yields the claimed bound, and the final inequality follows
from $\min(d(u)-1,d(v)-1)\le \Delta(G)-1$.
\end{proof}

\medskip

The sandwich
\[
\mathrm{def}(G)\ \le\ \ell(\mathcal A(G))\ \le\ (\Delta(G)-1)\,\mathrm{def}(G)
\]
shows that, for graphs of bounded maximum degree, $\ell(\mathcal A(G))$ and the Max-Cut
defect are within a constant factor of one another. In particular, this ties the frustration
index of $\mathcal A(G)$ to a classical NP-hard optimization objective.

A stronger statement holds in the cubic case: the factor-$\Delta(G)-1$ slack collapses,
and $\ell(\mathcal A(G))$ becomes \emph{exactly} twice the odd cycle transversal number.

\begin{theorem}[Exact frustration on cubic graphs]\label{thm:cubic-oct}
Let $G$ be a cubic (3-regular) graph, and let $\mathrm{oct}(G)$ denote the \emph{odd cycle
transversal number} of $G$, i.e.\ the minimum size of a set $S\subseteq V(G)$ such that the
induced subgraph $G[V(G)\setminus S]$ is bipartite. Then
\[
\ell(\mathcal A(G)) \;=\; 2\,\mathrm{oct}(G).
\]
\end{theorem}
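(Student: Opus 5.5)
The proof will rest on Lemma~\ref{lem:orientation-frustration}, which expresses $\ell(\mathcal A(G))$ as $\min_{\vec G}\sum_v d^+(v)d^-(v)$. In a cubic graph each vertex has $(d^+(v),d^-(v))\in\{(3,0),(0,3),(2,1),(1,2)\}$, so $d^+(v)d^-(v)\in\{0,2\}$. It is $0$ exactly when $v$ is a source or a sink of $\vec G$, and $2$ otherwise. Writing $M(\vec G)$ for the set of \emph{mixed} vertices, we get
\[
\sum_v d^+(v)d^-(v)=2\,|M(\vec G)|,
\]
so $\ell(\mathcal A(G))=2\min_{\vec G}|M(\vec G)|$. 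It therefore suffices to show that $\min_{\vec G}|M(\vec G)|=\mathrm{oct}(G)$, which I will do with two inequalities.

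\emph{Lower bound.} Fix any orientation $\vec G$ and set $S:=M(\vec G)$. Every vertex of $V(G)\setminus S$ is a source or a sink of $\vec G$ itself, not merely of the induced subgraph. No edge can join two sources, since that edge would enter one of them. Likewise no edge can join two sinks. Hence every edge of $G[V\setminus S]$ joins a source to a sink, and the sources and sinks form a bipartition of $G[V\setminus S]$. So $S$ is an odd cycle transversal, and $|M(\vec G)|\ge \mathrm{oct}(G)$.

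\emph{Upper bound.} Let $S$ be a minimum odd cycle transversal, and let $X\sqcup Y$ be a bipartition of $G[V\setminus S]$. Orient the edges as follows:
\begin{itemize}
\item every edge between $X$ and $Y$ from $X$ to $Y$;
\item every edge between $X$ and $S$ from $X$ into $S$;
\item every edge between $S$ and $Y$ from $S$ into $Y$;
\item edges inside $S$ arbitrarily.
\end{itemize}
There are no edges inside $X$ or inside $Y$. Then every vertex of $X$ is a source and every vertex of $Y$ is a sink, so $M(\vec G)\subseteq S$ and $\min_{\vec G}|M(\vec G)|\le \mathrm{oct}(G)$.

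Combining the two inequalities gives $\ell(\mathcal A(G))=2\,\mathrm{oct}(G)$. I do not expect a real obstacle here. The only point needing care is the lower bound: "source or sink" must be read in the full orientation, which is what rules out edges between two sources or two sinks. The step that genuinely uses cubicity is the dichotomy $d^+d^-\in\{0,2\}$. For larger degrees a mixed vertex can cost more than $2$, which is why only the general bounds of Theorems~\ref{thm:frustration-lower} and~\ref{thm:frustration-upper} are available there.
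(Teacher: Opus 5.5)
Your proof is correct and follows the paper's argument essentially step for step. You reduce via Lemma~\ref{lem:orientation-frustration} and the cubic dichotomy $d^+(v)d^-(v)\in\{0,2\}$ to $\ell(\mathcal A(G))=2\min_{\vec G}|\mathrm{Mix}(\vec G)|$, show the mixed vertices of any orientation form an odd cycle transversal (sources and sinks give the bipartition), and for the converse orient every edge away from one side of a bipartition of $G[V\setminus S]$ and into the other, which is exactly the paper's construction.
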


\begin{proof}
By Lemma~\ref{lem:orientation-frustration},
\[
\ell(\mathcal A(G))=\min_{\vec G}\ \sum_{v\in V(G)} d^+(v)\,d^-(v).
\]
Assume $G$ is cubic. Then $d^+(v)+d^-(v)=3$ for all $v$, hence
\[
d^+(v)d^-(v)\in\{0,2\},
\]
with $d^+(v)d^-(v)=0$ if and only if $v$ is a pure source or pure sink in $\vec G$, and
$d^+(v)d^-(v)=2$ otherwise. Let
\[
\mathrm{Mix}(\vec G):=\{v\in V(G): 0<d^+(v)<3\}
\]
be the set of mixed vertices. Then for any orientation $\vec G$,
\[
\sum_{v\in V(G)} d^+(v)\,d^-(v)=2\,|\mathrm{Mix}(\vec G)|,
\]
so
\begin{equation}\label{eq:cubic-mix}
\ell(\mathcal A(G))=2\cdot \min_{\vec G}|\mathrm{Mix}(\vec G)|.
\end{equation}

Fix an orientation $\vec G$ and set $S:=\mathrm{Mix}(\vec G)$ and $V_0:=V(G)\setminus S$.
Partition $V_0=A\sqcup B$ where $A$ consists of pure sources and $B$ of pure sinks.
There is no edge inside $A$ (an edge between two sources would force one endpoint to have
an incoming edge) and similarly no edge inside $B$. Hence $G[V_0]$ is bipartite, so $S$ is
an odd cycle transversal. Thus $|S|\ge \mathrm{oct}(G)$, and \eqref{eq:cubic-mix} yields
$\ell(\mathcal A(G))\ge 2\,\mathrm{oct}(G)$.

Conversely, let $S\subseteq V(G)$ be an odd cycle transversal of size $\mathrm{oct}(G)$ and
write $V_0:=V(G)\setminus S$. Choose a bipartition $V_0=A\sqcup B$ of the induced subgraph
$G[V_0]$. Define an orientation $\vec G$ of all edges of $G$ by orienting every edge
incident to a vertex of $A$ away from that vertex, and every edge incident to a vertex of
$B$ towards that vertex; orient edges with both endpoints in $S$ arbitrarily. This is
well-defined: if $uv$ has $u\in A$ then we set $u\to v$, and if $uv$ has $u\in B$ then we
set $v\to u$, and for edges $uv$ with $u\in A$, $v\in B$ the two prescriptions agree.

With this orientation, every vertex in $A$ is a pure source and every vertex in $B$ is a
pure sink, hence $\mathrm{Mix}(\vec G)\subseteq S$ and $|\mathrm{Mix}(\vec G)|\le|S|$.
Applying \eqref{eq:cubic-mix} gives $\ell(\mathcal A(G))\le 2|S|=2\,\mathrm{oct}(G)$.
Combining the two inequalities proves the theorem.
\end{proof}

\begin{remark}[Mechanism in the cubic case]
The proof shows more precisely that, for a cubic graph $G$,
\[
\min_{\vec G}|\mathrm{Mix}(\vec G)|=\mathrm{oct}(G),
\]
where
\[
\mathrm{Mix}(\vec G):=\{v\in V(G):0<d^+(v)<3\}.
\]
Indeed, for any orientation $\vec G$, the mixed-vertex set is an odd cycle
transversal, while conversely any odd cycle transversal $S$ together with a
bipartition of $G[V(G)\setminus S]$ yields an orientation with
\[
\mathrm{Mix}(\vec G)\subseteq S.
\]
Combined with \eqref{eq:cubic-mix}, this is exactly the mechanism behind
Theorem~\ref{thm:cubic-oct}.
\end{remark}

\begin{corollary}[FPT transfer on cubic graphs]
On cubic graphs, computing $\ell(\mathcal A(G))$ is fixed-parameter tractable when parameterized by
$k=\ell(\mathcal A(G))$ (equivalently by $k/2=\mathrm{oct}(G)$),
since Odd Cycle Transversal is fixed-parameter tractable \cite{ReedSmithVetta2004}.
\end{corollary}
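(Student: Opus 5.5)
The corollary follows from Theorem~\ref{thm:cubic-oct} by a parameter-preserving reduction. On cubic graphs the identity $\ell(\mathcal A(G))=2\,\mathrm{oct}(G)$ turns the parameterized question ``is $\ell(\mathcal A(G))\le k$?'' into ``is $\mathrm{oct}(G)\le \lfloor k/2\rfloor$?''. Because $\ell(\mathcal A(G))$ is always even on cubic graphs, the floor is harmless. The reduction itself is trivial: it returns the same graph $G$, the new parameter $\lfloor k/2\rfloor$ is bounded by a function of $k$, and it runs in linear time. I would first check that $G$ is $3$-regular, which takes linear time, so that Theorem~\ref{thm:cubic-oct} applies.

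\textbf{Step 1 (decision version).} Run the Reed--Smith--Vetta algorithm \cite{ReedSmithVetta2004} on the instance $(G,\lfloor k/2\rfloor)$. Its running time has the form $O(3^{k'}\,k'\,|E|\,|V|)$ with $k'=\lfloor k/2\rfloor$, which is of the form $f(k)\cdot\mathrm{poly}(|G|)$. By Theorem~\ref{thm:cubic-oct}, this instance is a yes-instance exactly when $\ell(\mathcal A(G))\le k$.

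\textbf{Step 2 (computing the value).} To compute $\ell(\mathcal A(G))$ itself, with $k=\ell(\mathcal A(G))$ as the parameter, test $k'=0,1,2,\dots$ in increasing order until the first yes-instance, which is $k'=\mathrm{oct}(G)$, and output $2k'$. The total cost is $\sum_{k'\le \mathrm{oct}(G)} f(k')\,\mathrm{poly}(n)$, which is still FPT in $k$.

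\textbf{Step 3 (optimal orientation, optional).} If an optimal orientation or switching is also wanted, take the transversal $S$ that the algorithm returns. Two-colour $G-S$ by BFS, and orient edges as in the converse half of the proof of Theorem~\ref{thm:cubic-oct}. By \eqref{eq:cubic-mix} this orientation has at most $2|S|$ directed $2$-paths, and Proposition~\ref{prop:switching} then converts it into a switching of $\mathcal A(G)$ with the minimum number of negative edges.

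There is no real obstacle here. The only point needing care is that the parameter relationship is exact rather than merely bounded, and Theorem~\ref{thm:cubic-oct} supplies exactly that. The argument also uses that the cited OCT algorithm is constructive, which is needed only for Step~3.
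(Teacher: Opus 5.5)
Your proposal is correct and matches the paper's argument. The paper justifies the corollary only by combining $\ell(\mathcal A(G))=2\,\mathrm{oct}(G)$ from Theorem~\ref{thm:cubic-oct} with the fixed-parameter tractability of Odd Cycle Transversal, and your Steps 1--3 spell out that parameter-preserving reduction in full. One cosmetic quibble: the running time in \cite{ReedSmithVetta2004} is usually quoted as $O(4^{k}kmn)$, with the $3^{k}$ base coming from a later refinement, but any $f(k)\cdot\mathrm{poly}(n)$ bound suffices.
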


\begin{corollary}[Kernelization transfer (randomized)]
On cubic graphs, computing $\ell(\mathcal A(G))$ admits a randomized polynomial kernelization
parameterized by $k=\ell(\mathcal A(G))$, via the equivalence $k/2=\mathrm{oct}(G)$
and the randomized polynomial kernel for Odd Cycle Transversal
of Kratsch--Wahlstr\"om \cite{KratschWahlstrom2012}.
\end{corollary}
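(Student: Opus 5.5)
The plan is to reduce the kernelization claim to the cubic exactness theorem and then apply the Kratsch--Wahlstr\"om kernel as a black box. Concretely, take an instance $(G,k)$ of the problem ``is $\ell(\mathcal A(G))\le k$?'' with $G$ cubic. By Theorem~\ref{thm:cubic-oct}, $\ell(\mathcal A(G))=2\,\mathrm{oct}(G)$ is always even. So $(G,k)$ is a yes-instance exactly when $\mathrm{oct}(G)\le\lfloor k/2\rfloor$. The map $(G,k)\mapsto(G,\lfloor k/2\rfloor)$ is therefore a polynomial-time, parameter-preserving reduction (up to a factor of $2$) to Odd Cycle Transversal. If the intended problem is computing the value rather than deciding the threshold, the same equivalence gives the value as twice the OCT optimum.

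Next, apply the randomized polynomial kernel of Kratsch--Wahlstr\"om \cite{KratschWahlstrom2012}. In polynomial time it outputs, with high probability, an equivalent OCT instance $(G',k')$ whose size is bounded by $\mathrm{poly}(k/2)$, hence by $\mathrm{poly}(k)$.

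The main obstacle is that the kernelized graph $G'$ need not be cubic, so $(G',k')$ is not literally an instance of the cubic $\ell(\mathcal A)$ problem. I would address this in one of two ways. The first option is to state the result as a polynomial compression: a size-$\mathrm{poly}(k)$ equivalent instance of OCT, which is how such transfers are usually phrased, and which matches the corollary's wording ``via the equivalence''. The second option is to try to return to the source problem by using NP-completeness of cubic OCT (equivalently, by the theorem, of cubic $\ell(\mathcal A)$) to map $(G',k')$ back in polynomial time to a cubic instance. Since that map runs in time polynomial in $|G'|=\mathrm{poly}(k)$, its output still has size $\mathrm{poly}(k)$, which would give a genuine kernel.

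I would first check that cubic OCT lies in NP and is NP-hard under the paper's conventions. Membership in NP is immediate; hardness follows from the identity $\mathrm{oct}=\mathrm{def}$-type reductions on cubic graphs, or from a citation. Given that, I would present the second route. The randomization is inherited directly from \cite{KratschWahlstrom2012}, so the reduction adds no further error probability.
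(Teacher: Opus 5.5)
Your proposal is correct and follows the route the paper indicates: translate the budget using $\ell(\mathcal A(G))=2\,\mathrm{oct}(G)$ from Theorem~\ref{thm:cubic-oct}, then invoke the Kratsch--Wahlstr\"om kernel as a black box. The paper stops there. Your additional step, mapping the (generally non-cubic) kernel output back to a cubic instance via NP-hardness of the cubic problem, is exactly what is needed to get a genuine kernel rather than only a polynomial compression into general OCT; that hardness is most cleanly justified by NP-hardness of Max-Cut on cubic graphs together with $\mathrm{oct}=\mathrm{def}$ on graphs of maximum degree $3$.
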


\begin{corollary}[Approximation transfer]
Any polynomial-time $\alpha$-approximation algorithm for Odd Cycle Transversal on cubic graphs yields,
in polynomial time, an $\alpha$-approximation for $\ell(\mathcal A(G))$ on cubic graphs via
$\ell(\mathcal A(G))=2\,\mathrm{oct}(G)$.
\end{corollary}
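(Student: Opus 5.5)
The plan is to compose the exact identity of Theorem~\ref{thm:cubic-oct} with the given approximation algorithm and track how the approximation ratio transfers. Let $\mathcal{B}$ be a polynomial-time $\alpha$-approximation for Odd Cycle Transversal on cubic graphs. For a minimization problem this means that on input a cubic graph $G$, $\mathcal{B}$ returns a vertex set $S$ with $G[V(G)\setminus S]$ bipartite and $|S|\le \alpha\,\mathrm{oct}(G)$. (For a maximization convention the argument is identical with the inequality reversed, but OCT is a minimization problem, so this is the relevant reading.)

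The algorithm for $\ell(\mathcal A(G))$ runs $\mathcal{B}$ on $G$ and outputs $2|S|$. To make the output certified rather than just a number, I will also produce an orientation witnessing it, reusing the converse construction in the proof of Theorem~\ref{thm:cubic-oct}. First take a bipartition $A\sqcup B$ of $G[V(G)\setminus S]$, computed in linear time by BFS two-colouring. Then orient edges at $A$ outward, edges at $B$ inward, and the remaining edges arbitrarily. That proof shows $\mathrm{Mix}(\vec G)\subseteq S$, so by \eqref{eq:cubic-mix} and Lemma~\ref{lem:orientation-frustration} this orientation has at most $2|S|$ negative edges in $\mathcal A(G)$. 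It is therefore a feasible solution of value $\le 2|S|$, and every step runs in polynomial time.

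For the ratio, Theorem~\ref{thm:cubic-oct} gives $\ell(\mathcal A(G))=2\,\mathrm{oct}(G)$, so
\[
\ell(\mathcal A(G))\;\le\; 2|\mathrm{Mix}(\vec G)|\;\le\; 2|S|\;\le\; 2\alpha\,\mathrm{oct}(G)\;=\;\alpha\,\ell(\mathcal A(G)).
\]
The left inequality holds because $\ell$ is the minimum over orientations, and the others follow from the steps above. Hence the orientation's value, and also the number $2|S|$, is within factor $\alpha$ of the optimum.

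There is no real obstacle here. The only point needing care is that the approximation applies to a feasible orientation and not merely to a numerical estimate. That is why I route the output through the explicit orientation from the proof of Theorem~\ref{thm:cubic-oct}, rather than quoting only the scalar identity.
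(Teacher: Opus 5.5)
Your proposal is correct and takes the same approach as the paper. The paper gives no separate argument and simply invokes $\ell(\mathcal A(G))=2\,\mathrm{oct}(G)$ from Theorem~\ref{thm:cubic-oct}; your chain $\ell(\mathcal A(G))\le 2|\mathrm{Mix}(\vec G)|\le 2|S|\le 2\alpha\,\mathrm{oct}(G)=\alpha\,\ell(\mathcal A(G))$ makes that explicit, and building a witnessing orientation from the constructive half of that proof is a sound refinement, since that construction works for any odd cycle transversal, not just a minimum one.
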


\begin{corollary}[NP-hardness]\label{cor:nphard-frustration}
Computing $\ell(\mathcal A(G))$ is NP-hard, even when the input graph $G$ is restricted to be cubic.
\end{corollary}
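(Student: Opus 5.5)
The plan is to reduce from Odd Cycle Transversal on cubic graphs, using Theorem~\ref{thm:cubic-oct}. For cubic $G$ we have $\ell(\mathcal A(G))=2\,\mathrm{oct}(G)$. Hence any algorithm computing $\ell(\mathcal A(G))$ on cubic inputs computes $\mathrm{oct}(G)$ after halving. The same holds for the decision version: $\ell(\mathcal A(G))\le 2k$ if and only if $\mathrm{oct}(G)\le k$. The reduction is the identity map on the input graph, so it is trivially polynomial.

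The remaining ingredient is that Odd Cycle Transversal, equivalently vertex bipartization, is NP-hard on cubic graphs. I would first cite this from the literature. Vertex deletion to a hereditary nontrivial property is NP-hard by Lewis--Yannakakis, and bipartization remains NP-hard for graphs of maximum degree $3$, and in fact for cubic graphs (Choi, Nakajima and Rim, or Yannakakis's results on degree-bounded node-deletion).

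If only the maximum-degree-$3$ version is available, I would pad to a cubic graph. Attach to each vertex $v$ of degree less than $3$ one or two copies of a fixed bipartite gadget, for instance $K_{3,3}$ with one edge subdivided, or a small cubic-completion gadget. The gadget should have exactly one free attachment vertex of degree $2$, and it must satisfy two conditions. First, it creates no new odd cycles through $v$ beyond those of the gadget itself. Second, it stays bipartite after any bipartition of the rest of the graph, so $\mathrm{oct}$ is unchanged.

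The main obstacle is making this padding rigorous. A gadget attached by a single cut edge is harmless: every cycle through that edge would have to return through it, so no cycle crosses it. A bipartite gadget whose internal vertices all have degree $3$ and which is attached through a bridge therefore preserves $\mathrm{oct}$. It also preserves cubicity as long as the degrees match up, so I would choose such a bridged gadget explicitly.

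Combining the two steps, a polynomial algorithm for $\ell(\mathcal A(G))$ on cubic graphs would decide cubic OCT. This shows NP-hardness.
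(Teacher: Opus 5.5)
Your core reduction is the paper's. Its proof cites NP-completeness of Odd Cycle Transversal on graphs of maximum degree $3$, then uses Theorem~\ref{thm:cubic-oct} as an identity-map reduction, exactly as in your first paragraph. The paper applies the cubic identity without addressing the passage from maximum degree $3$ to cubic inputs, so you are right that a padding step is needed when only the subcubic result is cited. However, your padding step cannot be carried out as specified.

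\textbf{The gadget you describe cannot exist.} No graph in which one vertex has degree $2$ and all others have degree $3$ is bipartite. If such a graph had parts $X\ni w$ and $Y$, counting edges from each side would give $3|X|-1=3|Y|$, which is impossible. Every gadget attached by a single bridge that brings degrees up to $3$ must have exactly this degree profile. So the requirement that it be bipartite and leave $\mathrm{oct}$ unchanged is unsatisfiable. Your own example confirms this: in $K_{3,3}$ with $a_1b_1$ subdivided by $w$, the cycle $a_1wb_1a_2b_2a_1$ has length $5$.

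\textbf{Repair.} Your bridge observation still does the work. No cycle crosses a bridge, so every odd cycle of the padded graph $G'$ lies inside $G$ or inside one gadget copy $H_i$. These vertex sets are disjoint. Each subdivided $K_{3,3}$ has $\mathrm{oct}(H_i)=1$: it is non-bipartite, and deleting $w$ leaves a bipartite graph. Hence, with $t$ copies, $\mathrm{oct}(G')=\mathrm{oct}(G)+t$, and $\mathrm{oct}(G)\le k$ if and only if $\ell(\mathcal A(G'))\le 2(k+t)$. Also, a vertex of degree $0$ needs three copies (or can simply be deleted), not ``one or two''.

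\textbf{A gadget-free alternative.} When $\Delta(G)\le 3$, take a minimum odd cycle transversal and re-insert its vertices one at a time. Put each on the side containing at most one of its already-placed neighbours. This yields a bipartition with at most $\mathrm{oct}(G)$ monochromatic edges, so $\mathrm{def}(G)=\mathrm{oct}(G)$ on such graphs. NP-hardness on cubic inputs then follows from NP-hardness of Max-Cut on cubic graphs.
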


\begin{proof}
Odd Cycle Transversal (Graph Bipartization) is NP-hard and remains NP-complete on graphs of
maximum degree $3$ \cite{ChoiNakajimaRim1989}. Theorem~\ref{thm:cubic-oct} yields a
polynomial-time many-one reduction from $\mathrm{oct}(G)$ to $\ell(\mathcal A(G))$ on cubic inputs.
\end{proof}

\medskip
\noindent
The results of this section show that the antisymmetric line graph does not introduce a parameter foreign to classical graph theory. Rather, its switching-invariant frustration measures recover, in natural graph classes, fundamental bipartiteness parameters such as the Max-Cut defect and the odd cycle transversal number. In particular, on cubic graphs the edge-frustration index $\ell(\mathcal A(G))$ coincides exactly with $2\,\mathrm{oct}(G)$, so that a canonical signed invariant encodes a central vertex-deletion parameter of graph theory. In this sense, signed graph theory here does not extend ordinary graph theory away from its classical core, but instead provides an alternative structural encoding of it.
\medskip

\paragraph{Empirical comparison.}
For computation, on a large sample of connected non-bipartite graphs from the NetworkX atlas
with at most seven vertices and with $|E(G)|\le 12$ (to allow exact computation under a
per-instance cutoff), we computed $\ell(\mathcal A(G))$ and $\mathrm{def}(G)$ for $657$ graphs
(with $24$ timeouts). The Pearson correlation between $\ell(\mathcal A(G))$ and $\mathrm{def}(G)$
was $0.852$, indicating a strong relationship. Here $\ell(\mathcal A(G))$ was computed by
exhaustive switching search on $\mathcal A(G)$ (fixing one vertex to reduce to
$2^{|V(\mathcal A(G))|-1}$ switchings), and the $24$ timeouts refer to instances where this
exact computation exceeded the per-graph cutoff.

However, $\mathrm{def}(G)$ does not determine $\ell(\mathcal A(G))$.
For instance, within the sample we observed that graphs with $\mathrm{def}(G)=3$ can have
$\ell(\mathcal A(G))$ ranging from $3$ up to $10$, and similarly $\mathrm{def}(G)=2$ allows
$\ell(\mathcal A(G))\in\{2,3,4,5,6,7\}$.
As an explicit witness, the following two non-isomorphic graphs on $7$ vertices with $9$ edges
satisfy $\mathrm{def}(G)=\mathrm{def}(H)=2$ but have different frustration indices:
\[
\ell(\mathcal A(G))=2,\qquad \ell(\mathcal A(H))=4,
\]
where
\[
E(G)=\{01,02,12,14,23,34,35,36,56\},\qquad
E(H)=\{01,02,03,04,05,12,13,23,56\}.
\]
Thus $\ell(\mathcal A(G))$ provides a refinement of the classical Max-Cut defect, capturing
additional incidence-level obstruction beyond vertex bipartition alone.

\section{Laplacian optimization and spectral bounds}
\label{sec:laplacian-optimization}

We now return to the Gram-matrix identity
\[
A_{\mathcal A(G)}+2I=D^{\mathsf T}D
\]
from Section~\ref{sec:laplacian-geometry}, but use it in a different way.
The frustration index of $\mathcal A(G)$ is not only a signed-graph invariant and
not only a turn-counting invariant from Lemma~\ref{lem:orientation-frustration};
it is also an exact Boolean optimization of the edge-space Laplacian quadratic
form.

Fix once and for all a reference orientation of $G$ and let $D$ be the resulting
oriented incidence matrix.
For a sign vector $x\in\{\pm1\}^{E(G)}$, let $\vec G_x$ denote the orientation
obtained by reversing exactly those edges $e$ for which $x_e=-1$.
Then for each vertex $v$ one has
\[
(Dx)_v=d^+_{\vec G_x}(v)-d^-_{\vec G_x}(v).
\]

\begin{proposition}[Exact imbalance identity]
\label{prop:imbalance-identity}
For every finite simple graph $G$,
\[
\ell(\mathcal A(G))
=
\frac14\sum_{v\in V(G)} d(v)^2
-
\frac14\max_{x\in\{\pm1\}^{E(G)}} \|Dx\|^2.
\]
Equivalently, if
\[
M(G):=\max_{x\in\{\pm1\}^{E(G)}} \|Dx\|^2,
\]
then
\[
\ell(\mathcal A(G))
=
\frac14\sum_{v\in V(G)} d(v)^2-\frac14 M(G).
\]
\end{proposition}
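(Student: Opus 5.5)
The plan is to combine Lemma~\ref{lem:orientation-frustration} with the elementary identity $4ab=(a+b)^2-(a-b)^2$, applied vertex by vertex. Orientations of $G$ correspond bijectively to sign vectors $x\in\{\pm1\}^{E(G)}$ via $x\mapsto\vec G_x$. So the minimum in Lemma~\ref{lem:orientation-frustration} can be rewritten as a minimum over $x$:
\[
\ell(\mathcal A(G))=\min_{x\in\{\pm1\}^{E(G)}}\sum_{v\in V(G)} d^+_{\vec G_x}(v)\,d^-_{\vec G_x}(v).
\]

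First I will verify the coordinate formula $(Dx)_v=d^+_{\vec G_x}(v)-d^-_{\vec G_x}(v)$ stated just before the proposition. The column of $D$ indexed by $e$ has entry $\pm1$ at $v$ according to whether $v$ is the head or tail of $e$ in the reference orientation. Multiplying that column by $x_e=-1$ gives the incidence column of the reversed edge, so $Dx$ is exactly the incidence matrix of $\vec G_x$ applied to the all-ones vector. Its $v$-th entry is the number of edges entering $v$ minus the number leaving. Whether this is called $d^+-d^-$ or $d^--d^+$ is immaterial, since only its square is used.

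Next, at each vertex we have $d^+(v)+d^-(v)=d(v)$. Hence
\[
4\,d^+(v)d^-(v)=d(v)^2-\bigl(d^+(v)-d^-(v)\bigr)^2=d(v)^2-(Dx)_v^2 .
\]
Summing over $v$ gives, for every fixed $x$,
\[
\sum_v d^+_{\vec G_x}(v)d^-_{\vec G_x}(v)=\tfrac14\sum_v d(v)^2-\tfrac14\|Dx\|^2 .
\]
The first term does not depend on $x$. Therefore minimizing the left side over $x$ is the same as maximizing $\|Dx\|^2$, and this yields the identity with $M(G)$.

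There is no real obstacle here. The only points needing care are the bookkeeping of the correspondence between $x$ and orientations, which must be a bijection so that the two extrema range over the same set, and the sign convention in $(Dx)_v$. Both are harmless, because the expression is quadratic in $Dx$ and the map $x\mapsto\vec G_x$ is clearly invertible.
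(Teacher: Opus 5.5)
Your proposal is correct and follows essentially the same route as the paper: apply Lemma~\ref{lem:orientation-frustration}, use $4d^+(v)d^-(v)=d(v)^2-\bigl(d^+(v)-d^-(v)\bigr)^2$ at each vertex, identify the vertex-imbalance vector of $\vec G_x$ with $Dx$, and turn the minimum over orientations into the maximum of $\|Dx\|^2$ over sign vectors. You also observe that only $(Dx)_v^2$ enters the argument, which is a small clarification that neatly removes any dependence on the head/tail sign convention in the incidence matrix.
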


\begin{proof}
By Lemma~\ref{lem:orientation-frustration},
\[
\ell(\mathcal A(G))
=
\min_{\vec G}\sum_{v\in V(G)} d^+_{\vec G}(v)d^-_{\vec G}(v).
\]
For any orientation $\vec G$ and any vertex $v$,
\[
d^+_{\vec G}(v)+d^-_{\vec G}(v)=d(v),
\]
and hence
\[
\bigl(d^+_{\vec G}(v)-d^-_{\vec G}(v)\bigr)^2
=
d(v)^2-4\,d^+_{\vec G}(v)d^-_{\vec G}(v).
\]
Rearranging gives
\[
d^+_{\vec G}(v)d^-_{\vec G}(v)
=
\frac14\Bigl(d(v)^2-\bigl(d^+_{\vec G}(v)-d^-_{\vec G}(v)\bigr)^2\Bigr).
\]
Summing over $v$ yields
\[
\sum_{v\in V(G)} d^+_{\vec G}(v)d^-_{\vec G}(v)
=
\frac14\sum_{v\in V(G)} d(v)^2
-
\frac14\sum_{v\in V(G)} \bigl(d^+_{\vec G}(v)-d^-_{\vec G}(v)\bigr)^2.
\]
Now every orientation of $G$ is of the form $\vec G_x$ for some
$x\in\{\pm1\}^{E(G)}$, and for this orientation the vertex-imbalance vector is
exactly $Dx$.
Therefore
\[
\sum_{v\in V(G)} \bigl(d^+_{\vec G_x}(v)-d^-_{\vec G_x}(v)\bigr)^2
=
\|Dx\|^2.
\]
Taking the minimum over orientations is therefore equivalent to taking the
maximum over sign vectors, and the result follows.
\end{proof}

\begin{remark}
The quantity
\[
M(G)=\max_{x\in\{\pm1\}^{E(G)}} \|Dx\|^2
\]
may be viewed as the \emph{maximal imbalance energy} of $G$.
It is the maximum of the edge-space Laplacian quadratic form
\[
x^{\mathsf T}D^{\mathsf T}Dx
\]
over the Boolean cube, and hence measures how strongly one can polarize the
vertex imbalances of an orientation of $G$.
\end{remark}

The exact identity above immediately yields a spectral lower bound by replacing
the Boolean cube by the Euclidean sphere.

\begin{theorem}[Spectral lower bound]
\label{thm:spectral-lower}
Let
\[
0=\lambda_1\le \lambda_2\le \cdots \le \lambda_n
\]
be the Laplacian eigenvalues of $G$, so that $\lambda_n=\lambda_{\max}(L)$.
Then
\[
\ell(\mathcal A(G))
\ge
\frac{\sum_{v\in V(G)} d(v)^2-|E(G)|\,\lambda_n}{4}.
\]
\end{theorem}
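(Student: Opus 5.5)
The plan is to start from the exact imbalance identity of Proposition~\ref{prop:imbalance-identity},
\[
\ell(\mathcal A(G))=\frac14\sum_{v\in V(G)} d(v)^2-\frac14 M(G),
\qquad M(G)=\max_{x\in\{\pm1\}^{E(G)}}\|Dx\|^2 .
\]
Since the first term does not depend on $x$, a lower bound on $\ell(\mathcal A(G))$ follows from any upper bound on $M(G)$. The theorem therefore reduces to proving
\[
M(G)\le |E(G)|\,\lambda_n .
\]

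To bound $M(G)$, relax the Boolean cube to a sphere. Every $x\in\{\pm1\}^{E(G)}$ satisfies $\|x\|^2=|E(G)|$. By the Rayleigh quotient bound for the symmetric positive semidefinite matrix $D^{\mathsf T}D$,
\[
\|Dx\|^2=x^{\mathsf T}D^{\mathsf T}Dx\le \lambda_{\max}(D^{\mathsf T}D)\,\|x\|^2 .
\]
It remains to identify $\lambda_{\max}(D^{\mathsf T}D)$ with $\lambda_n$. By Proposition~\ref{prop:edge-space-laplacian}(1), $D^{\mathsf T}D=S+2I$ and $L=DD^{\mathsf T}$ have the same nonzero eigenvalues. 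Both matrices are positive semidefinite, so their largest eigenvalues agree, namely $\lambda_n$. If $G$ has no edges, both sides of the inequality are zero and there is nothing to prove.

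The connectivity assumption in Section~\ref{sec:laplacian-geometry} is not needed for this step. The fact that $D^{\mathsf T}D$ and $DD^{\mathsf T}$ share their nonzero spectrum is pure linear algebra and holds for any graph. Taking the maximum over $x$ then gives $M(G)\le |E(G)|\lambda_n$. Substituting into the identity yields
\[
\ell(\mathcal A(G))\ge \frac{\sum_v d(v)^2-|E(G)|\lambda_n}{4}.
\]

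I expect no real obstacle in this argument. The only point that needs care is justifying that the top eigenvalue of the edge-space Gram matrix equals the top Laplacian eigenvalue. This uses that $\lambda_n$ is nonnegative, with the edgeless case handled separately as above.
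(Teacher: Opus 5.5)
Your proposal is correct and follows essentially the same route as the paper: start from the exact imbalance identity, relax the Boolean cube $\{\pm1\}^{E(G)}$ to the sphere $\|y\|^2=|E(G)|$, and identify $\lambda_{\max}(D^{\mathsf T}D)$ with $\lambda_n$ because $D^{\mathsf T}D$ and $DD^{\mathsf T}=L$ share their nonzero spectrum. Your remarks on the edgeless case and on connectivity not being needed are small points of care that the paper leaves implicit.
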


\begin{proof}
By Proposition~\ref{prop:imbalance-identity},
\[
\ell(\mathcal A(G))
=
\frac14\sum_{v\in V(G)} d(v)^2
-
\frac14\max_{x\in\{\pm1\}^{E(G)}} x^{\mathsf T}D^{\mathsf T}Dx.
\]
Every $x\in\{\pm1\}^{E(G)}$ has Euclidean norm $\|x\|^2=|E(G)|$, so
\[
\max_{x\in\{\pm1\}^{E(G)}} x^{\mathsf T}D^{\mathsf T}Dx
\le
\max_{\|y\|^2=|E(G)|} y^{\mathsf T}D^{\mathsf T}Dy.
\]
Since $D^{\mathsf T}D$ and $DD^{\mathsf T}=L$ have the same nonzero eigenvalues,
the latter maximum equals $|E(G)|\,\lambda_n$.
Substituting into Proposition~\ref{prop:imbalance-identity} gives the claim.
\end{proof}

\begin{corollary}[Combined lower bound]
\label{cor:combined-lower}
For every finite simple graph $G$,
\[
\ell(\mathcal A(G))
\ge
\max\left\{
\mathrm{def}(G),\,
\frac{\sum_{v\in V(G)} d(v)^2-|E(G)|\,\lambda_n}{4}
\right\}.
\]
\end{corollary}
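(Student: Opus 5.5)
This corollary follows by combining the two lower bounds already in hand. I do not expect any new argument to be needed.

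\textbf{Step 1.} Theorem~\ref{thm:frustration-lower} gives $\ell(\mathcal A(G))\ge \mathrm{def}(G)$ for every finite simple graph $G$. Its proof uses only the orientation characterization of Lemma~\ref{lem:orientation-frustration}, the pointwise inequality $\min(d^+(v),d^-(v))\le d^+(v)d^-(v)$, and the observation that deleting the minority-direction edges leaves a source/sink bipartite graph. None of this needs connectivity or regularity, so the bound holds in full generality.

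\textbf{Step 2.} Theorem~\ref{thm:spectral-lower} gives
\[
\ell(\mathcal A(G))\ge \frac{\sum_{v\in V(G)} d(v)^2-|E(G)|\,\lambda_n}{4}.
\]
I will check that this also holds for arbitrary (possibly disconnected) $G$. The argument relaxes the Boolean maximum in Proposition~\ref{prop:imbalance-identity} to the sphere of radius $\sqrt{|E(G)|}$. It then uses $\lambda_{\max}(D^{\mathsf T}D)=\lambda_{\max}(DD^{\mathsf T})=\lambda_n$, which is valid since the two products share their nonzero spectrum. If $E(G)=\emptyset$, both sides are zero. No connectivity hypothesis enters, so the bound is unconditional.

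\textbf{Step 3.} Since $\ell(\mathcal A(G))$ is at least each of the two quantities, it is at least their maximum, which is the claim.

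The only point needing care, and the closest thing to an obstacle, is confirming that both cited theorems were proved without the standing connectivity assumption of Section~\ref{sec:laplacian-geometry}. Section~\ref{sec:laplacian-optimization} fixes only a reference orientation, and the eigenvalue comparison between $D^{\mathsf T}D$ and $L$ is purely linear-algebraic, so I expect this to go through directly.
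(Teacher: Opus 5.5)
Your proposal is correct and is exactly the paper's argument: the corollary follows by combining Theorem~\ref{thm:frustration-lower} with Theorem~\ref{thm:spectral-lower} and taking the maximum of the two bounds. Your extra checks are sound and make explicit what the paper leaves implicit: neither theorem needs connectivity, $\lambda_{\max}(D^{\mathsf T}D)=\lambda_n$ because the two products share their nonzero spectrum, and the edgeless case is trivial.
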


\begin{proof}
Combine Theorem~\ref{thm:spectral-lower} with
Theorem~\ref{thm:frustration-lower}.
\end{proof}

For regular graphs the bound takes a particularly simple form.

\begin{corollary}[Regular case]
\label{cor:regular-spectral}
If $G$ is $\Delta$-regular on $n$ vertices, then
\[
\ell(\mathcal A(G))
\ge
\frac{n\Delta}{4}\left(\Delta-\frac{\lambda_n}{2}\right).
\]
\end{corollary}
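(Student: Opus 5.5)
This corollary is a direct specialization of Theorem~\ref{thm:spectral-lower}, so the plan is simply to substitute the regular-graph values into its bound and simplify. No new optimization argument is needed: the inequality $\ell(\mathcal A(G))\ge \tfrac14\bigl(\sum_v d(v)^2-|E(G)|\,\lambda_n\bigr)$ already holds for every finite simple graph, and we only evaluate its right-hand side.

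First, I will record the two quantities entering the bound when every vertex has degree $\Delta$ and $|V(G)|=n$. One has $\sum_{v\in V(G)} d(v)^2=n\Delta^2$. By the handshake lemma, $|E(G)|=n\Delta/2$.

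Second, I will substitute these into Theorem~\ref{thm:spectral-lower}:
\[
\ell(\mathcal A(G))\ge \frac{n\Delta^2-\tfrac{n\Delta}{2}\lambda_n}{4}
=\frac{n\Delta}{4}\left(\Delta-\frac{\lambda_n}{2}\right),
\]
which is exactly the claimed inequality. There is no real obstacle; the only point to be careful about is $|E(G)|=n\Delta/2$ rather than $n\Delta$.

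As an optional remark, for a $\Delta$-regular graph we have $L=\Delta I-A$, so $\lambda_n=\Delta-\mu_{\min}(A)\le 2\Delta$. The bound is therefore nontrivial precisely when $\mu_{\min}(A)>-\Delta$, that is, when $G$ has no bipartite component. This is consistent with Corollary~\ref{cor:balanced-iff-bipartite}. I would mention it only as a sanity check, not as part of the proof.
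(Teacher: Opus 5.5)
Your proposal is correct and matches the paper's proof: substitute $\sum_v d(v)^2=n\Delta^2$ and $|E(G)|=n\Delta/2$ into Theorem~\ref{thm:spectral-lower} and factor out $n\Delta/4$. Your optional remark is also accurate but not needed: $\lambda_n\le 2\Delta$, with equality exactly when some component of $G$ is bipartite.
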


\begin{proof}
If $G$ is $\Delta$-regular, then $\sum_v d(v)^2=n\Delta^2$ and
$|E(G)|=n\Delta/2$.
Substitute these into Theorem~\ref{thm:spectral-lower}.
\end{proof}

In the cubic case, Theorem~\ref{thm:cubic-oct} converts the spectral frustration
bound into a spectral lower bound on odd cycle transversal.

\begin{corollary}[Spectral lower bound on odd cycle transversal for cubic graphs]
\label{cor:cubic-spectral-oct}
Let $G$ be a cubic graph on $n$ vertices, and let $\lambda_n$ be the largest
Laplacian eigenvalue of $G$.
Then
\[
\mathrm{oct}(G)
\ge
\frac{\sum_{v\in V(G)} d(v)^2-|E(G)|\,\lambda_n}{8}
=
\frac{3n(6-\lambda_n)}{16}.
\]
\end{corollary}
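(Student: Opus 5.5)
This is a direct combination of two earlier results, the cubic exactness theorem and the spectral lower bound, followed by an arithmetic simplification. I would proceed in three steps.

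First, invoke Theorem~\ref{thm:cubic-oct}. For cubic $G$ it gives $\ell(\mathcal A(G))=2\,\mathrm{oct}(G)$, so $\mathrm{oct}(G)=\tfrac12\,\ell(\mathcal A(G))$.

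Second, apply Theorem~\ref{thm:spectral-lower}, which holds for every finite simple graph:
\[
\ell(\mathcal A(G))\ge \frac{\sum_{v} d(v)^2-|E(G)|\,\lambda_n}{4}.
\]
Dividing by $2$ yields the first inequality of the statement,
\[
\mathrm{oct}(G)\ge \frac{\sum_v d(v)^2-|E(G)|\lambda_n}{8}.
\]

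Third, specialize the numerator to the cubic case. Here $\sum_v d(v)^2=9n$ and $|E(G)|=3n/2$, so the numerator is
\[
9n-\tfrac{3n}{2}\lambda_n=\tfrac{3n}{2}(6-\lambda_n).
\]
Dividing by $8$ gives $\tfrac{3n(6-\lambda_n)}{16}$. Equivalently, one can take Corollary~\ref{cor:regular-spectral} with $\Delta=3$ and halve it.

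No step presents a genuine obstacle; the only care needed is the constant bookkeeping $9n-\tfrac32 n\lambda_n=\tfrac{3n}{2}(6-\lambda_n)$. It is also worth noting that the bound is informative only when $\lambda_n<6$, and $\lambda_n\le 2\Delta=6$ always holds. So the right-hand side is nonnegative, and it vanishes exactly when $\lambda_n=6$, which for a connected cubic graph is the bipartite case. This is consistent with $\mathrm{oct}(G)=0$ there.
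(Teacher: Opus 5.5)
Your proposal is correct and matches the paper's proof: the paper also combines $\ell(\mathcal A(G))=2\,\mathrm{oct}(G)$ with the spectral lower bound and substitutes $\sum_v d(v)^2=9n$ and $|E(G)|=3n/2$. Your closing observation, that $\lambda_n\le 6$ makes the bound nonnegative and that it vanishes for connected cubic graphs exactly in the bipartite case, is a correct extra that the paper does not state.
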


\begin{proof}
By Theorem~\ref{thm:cubic-oct},
\[
\ell(\mathcal A(G))=2\,\mathrm{oct}(G).
\]
Combining this with Theorem~\ref{thm:spectral-lower} gives
\[
\mathrm{oct}(G)
\ge
\frac{\sum_{v\in V(G)} d(v)^2-|E(G)|\,\lambda_n}{8}.
\]
For a cubic graph, $\sum_v d(v)^2=9n$ and $|E(G)|=3n/2$, so this simplifies to
\[
\mathrm{oct}(G)\ge \frac{9n-\frac{3n}{2}\lambda_n}{8}
=
\frac{3n(6-\lambda_n)}{16}.
\]
\end{proof}

\begin{proposition}[Incomparability of the defect and spectral lower bounds]
\label{prop:incomparable-bounds}
The two lower bounds in Corollary~\ref{cor:combined-lower},
\[
\mathrm{def}(G)
\qquad\text{and}\qquad
\frac{\sum_{v\in V(G)} d(v)^2-|E(G)|\,\lambda_n}{4},
\]
are incomparable.

More precisely:
\begin{enumerate}\itemsep2pt
\item[(i)] on odd cycles $C_{2k+1}$, the defect bound is exact while the
spectral bound tends to $0$ as $k\to\infty$;
\item[(ii)] on complete multipartite graphs, the spectral lower bound captures
exactly $3/4$ of the true value of $\ell(\mathcal A(G))$.
\end{enumerate}
\end{proposition}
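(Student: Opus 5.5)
For (i), I would first compute $\ell(\mathcal A(C_{2k+1}))$ directly from Lemma~\ref{lem:orientation-frustration}. Every vertex has degree $2$, so $d^+(v)d^-(v)\in\{0,1\}$, and it is $0$ exactly when $v$ is a source or a sink. An odd cycle cannot alternate sources and sinks all the way round, so every orientation has at least one mixed vertex, giving $\ell\ge 1$. Orienting alternately everywhere except at one spot produces exactly one mixed vertex, so $\ell=1$. Since $\mathrm{def}(C_{2k+1})=1$, the defect bound is exact.

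For the spectral side, set $n=2k+1$. Then $\sum_v d(v)^2=4n$ and $|E|=n$. The largest Laplacian eigenvalue of $C_n$ is $\lambda_n=2-2\cos(2\pi k/n)=2+2\cos(\pi/n)$. Hence the spectral bound equals
\[
\frac{n(2-2\cos(\pi/n))}{4}=\frac{n\sin^2(\pi/(2n))}{1}\sim\frac{\pi^2}{4n}\to 0 .
\]

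For (ii), let $G=K_{n_1,\dots,n_r}$ with $N=\sum_i n_i$, and write $p_m=\sum_i n_i^m$ and $e_3=\sum_{i<j<k}n_in_jn_k$. I will show $\ell(\mathcal A(G))=e_3$.

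For the upper bound, linearly order the parts and orient every edge from the earlier part to the later one. A vertex in part $j$ then has $d^-=\sum_{i<j}n_i$ and $d^+=\sum_{k>j}n_k$. Summing $d^+d^-$ over all vertices gives exactly $e_3$.

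For the lower bound, which I expect to be the key step, I count directed $2$-paths as in Proposition~\ref{prop:min-transit}. Any three vertices in three distinct parts span a triangle of $G$. An orientation of a triangle is either cyclic or transitive, and in either case it contains at least one directed $2$-path $u\to v\to w$ lying inside that triangle. A directed $2$-path whose endpoints $u,w$ are adjacent belongs to exactly one triangle, namely $\{u,v,w\}$. So the map sending each triangle to one of its internal $2$-paths is injective. Therefore every orientation has at least $\#\triangle(G)=e_3$ directed $2$-paths, and $\ell\ge e_3$.

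Next I evaluate the spectral bound. The Laplacian spectrum of a complete multipartite graph with $r\ge 2$ parts is $0$, then $N$ with multiplicity $r-1$, and $N-n_i$ with multiplicity $n_i-1$. Hence $\lambda_n=N$. Each vertex of part $i$ has degree $d_i=N-n_i$, and $|E|=\tfrac12\sum_i n_i d_i$. The bound therefore becomes
\[
\frac14\sum_i n_i d_i\Bigl(d_i-\frac N2\Bigr)=\frac18\sum_i n_i(N-n_i)(N-2n_i)=\frac18\bigl(N^3-3Np_2+2p_3\bigr).
\]
Newton's identity gives $N^3-3Np_2+2p_3=6e_3$. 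So the spectral bound equals $\tfrac34 e_3=\tfrac34\,\ell(\mathcal A(G))$, which is the claimed $3/4$ ratio whenever $e_3>0$ (that is, $r\ge 3$). When $r=2$ both sides vanish.

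The main obstacle is the matching lower bound $\ell\ge e_3$. The triangle-injection argument handles it cleanly, but one must check that no directed $2$-path is charged to two different triangles. This holds because a $2$-path determines its vertex triple.

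Finally, incomparability follows from the two families together. On odd cycles the defect bound beats the spectral bound. On, say, $K_{m,m,m}$ the spectral bound $\tfrac34 m^3$ exceeds the defect, since $\mathrm{def}(K_{m,m,m})=m^2$.
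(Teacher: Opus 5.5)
Your proof is correct and follows essentially the paper's route. Both use the orientation formula of Lemma~\ref{lem:orientation-frustration}, the ordered-parts orientation with the one-directed-$2$-path-per-triangle lower bound (you make the needed injectivity explicit), $\lambda_{\max}=N$ for complete multipartite graphs, and the identity $N^3-3Np_2+2p_3=6e_3$.

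Two remarks. First, your alternating orientation with a single mixed vertex is the right witness for $\ell(\mathcal A(C_{2k+1}))\le 1$. The cyclic orientation in the paper's proof of Proposition~\ref{prop:odd-cycles-spectral-fail} has $d^+(v)d^-(v)=1$ at every vertex, hence $2k+1$ directed $2$-paths. Second, your $K_{m,m,m}$ witness supplies the spectral-beats-defect direction that the paper leaves implicit, but it needs $m\ge 2$: for $K_3$ the spectral bound is $\tfrac34<1=\mathrm{def}$. The only fact you need there is the easy bound $\mathrm{def}(K_{m,m,m})\le m^2$, which comes from the cut $V_1$ versus $V_2\cup V_3$.
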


\begin{proof}
Part~(i) is Proposition~\ref{prop:odd-cycles-spectral-fail}, and part~(ii) is
Proposition~\ref{prop:complete-multipartite}.
\end{proof}

\begin{proposition}[Odd cycles: the spectral bound is asymptotically vacuous]
\label{prop:odd-cycles-spectral-fail}
For the odd cycle $C_{2k+1}$ one has
\[
\mathrm{def}(C_{2k+1})=1=\ell(\mathcal A(C_{2k+1})).
\]
On the other hand, the spectral lower bound from
Theorem~\ref{thm:spectral-lower} is
\[
\ell(\mathcal A(C_{2k+1}))
\ge
\frac{2k+1}{2}\left(1-\cos\frac{\pi}{2k+1}\right),
\]
and hence tends to $0$ as $k\to\infty$.
\end{proposition}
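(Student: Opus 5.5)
Three things have to be established: the equality $\mathrm{def}(C_{2k+1})=\ell(\mathcal A(C_{2k+1}))=1$, the explicit value of the spectral bound, and its limit as $k\to\infty$.

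\emph{The equality.} Since $C_{2k+1}$ is an odd cycle, it is not bipartite, so $\mathrm{def}(C_{2k+1})\ge 1$. Deleting any single edge leaves a path, which is bipartite, so $\mathrm{def}(C_{2k+1})=1$. Theorem~\ref{thm:frustration-lower} then gives $\ell(\mathcal A(C_{2k+1}))\ge 1$. For the upper bound I use Lemma~\ref{lem:orientation-frustration}. Orient the cycle $v_0v_1\cdots v_{2k}v_0$ so that the orientations alternate along the path $v_0,v_1,\dots,v_{2k}$, which is possible because the path has $2k$ edges. The closing edge $v_{2k}v_0$ can then be oriented so that exactly one vertex is mixed. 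Since every vertex has degree $2$, a mixed vertex contributes $d^+d^-=1$ and every other vertex contributes $0$. Hence $\sum_v d^+(v)d^-(v)=1$, and therefore $\ell(\mathcal A(C_{2k+1}))=1$.

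\emph{The spectral bound.} Write $n=2k+1$. Then $\sum_v d(v)^2=4n$ and $|E|=n$. The Laplacian eigenvalues of $C_n$ are $2-2\cos(2\pi j/n)$ for $j=0,\dots,n-1$. Because $n$ is odd, this is maximized at $j=k$, where $2\pi k/n=\pi-\pi/n$. This gives
\[
\lambda_n=2+2\cos\frac{\pi}{n}.
\]
Substituting into Theorem~\ref{thm:spectral-lower} yields
\[
\frac{4n-n\bigl(2+2\cos(\pi/n)\bigr)}{4}=\frac{n}{2}\left(1-\cos\frac{\pi}{n}\right),
\]
which is the displayed expression.

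\emph{The limit.} Use $1-\cos t\le t^2/2$. This bounds the expression by $\frac n2\cdot\frac{\pi^2}{2n^2}=\frac{\pi^2}{4n}$, which tends to $0$ as $k\to\infty$.

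The only step that needs care is identifying the largest Laplacian eigenvalue of the odd cycle, namely that it is attained at $j=k$ (equivalently $j=k+1$) and equals $2+2\cos(\pi/n)$ rather than $4$. Everything else is direct substitution into earlier results.
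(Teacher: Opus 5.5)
Your proof is correct and follows the same outline as the paper:
\begin{itemize}
\item $\mathrm{def}(C_{2k+1})=1$;
\item the lower bound from Theorem~\ref{thm:frustration-lower};
\item an explicit orientation fed into Lemma~\ref{lem:orientation-frustration} to get $\ell(\mathcal A(C_{2k+1}))\le 1$;
\item the identification $\lambda_{\max}=2+2\cos(\pi/n)$, substituted into Theorem~\ref{thm:spectral-lower}.
\end{itemize}

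The one genuine divergence is the witness orientation for the upper bound. There your choice is correct and the paper's is not. The paper orients $C_{2k+1}$ cyclically and claims that only one vertex carries a directed $2$-path. But in a directed cycle every vertex has $d^+=d^-=1$, so that orientation gives $\sum_v d^+(v)d^-(v)=2k+1$, not $1$.

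Your alternating orientation of the path makes $v_0$ and $v_{2k}$ the same type. That is what the evenness of the path length buys; alternation by itself is always possible. So either orientation of the closing edge produces exactly one mixed vertex, which is the right certificate. It is also exactly the orientation built in the proof of Theorem~\ref{thm:frustration-upper} for the maximum cut with $v_0,v_2,\dots,v_{2k}$ on one side. You could therefore equally quote $\ell(\mathcal A(C_{2k+1}))\le(2-1)\,\mathrm{def}(C_{2k+1})=1$.

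The remaining differences are cosmetic. You justify $\mathrm{def}\ge 1$ explicitly via non-bipartiteness, which the paper leaves implicit. You use the inequality $1-\cos t\le t^2/2$ to get the explicit bound $\pi^2/(4n)$, where the paper uses the asymptotic equivalence $1-\cos t\sim t^2/2$; either suffices for the limit.
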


\begin{proof}
Deleting a single edge from $C_{2k+1}$ makes it bipartite, so
\[
\mathrm{def}(C_{2k+1})=1.
\]
Also, by Theorem~\ref{thm:frustration-lower}, one has
\[
1=\mathrm{def}(C_{2k+1})\le \ell(\mathcal A(C_{2k+1})).
\]
On the other hand, orient the cycle cyclically. Then exactly one vertex of the
orientation contributes a directed $2$--path count of $1$, while every other
vertex contributes $0$, so Lemma~\ref{lem:orientation-frustration} gives
\[
\ell(\mathcal A(C_{2k+1}))\le 1.
\]
Hence
\[
\ell(\mathcal A(C_{2k+1}))=1.
\]

For the spectral term, the Laplacian eigenvalues of $C_n$ are
\[
2-2\cos\left(\frac{2\pi j}{n}\right),\qquad j=0,1,\dots,n-1.
\]
Thus for odd $n=2k+1$,
\[
\lambda_n(C_{2k+1})=2+2\cos\left(\frac{\pi}{2k+1}\right).
\]
Since $C_{2k+1}$ is $2$-regular on $2k+1$ vertices,
\[
\sum_{v\in V(C_{2k+1})} d(v)^2=4(2k+1),
\qquad
|E(C_{2k+1})|=2k+1.
\]
Substituting into Theorem~\ref{thm:spectral-lower} gives
\[
\ell(\mathcal A(C_{2k+1}))
\ge
\frac{4(2k+1)-(2k+1)\lambda_n(C_{2k+1})}{4}
=
\frac{2k+1}{2}\left(1-\cos\frac{\pi}{2k+1}\right).
\]
Since $1-\cos t\sim t^2/2$ as $t\to 0$, this is asymptotic to
\[
\frac{\pi^2}{4(2k+1)},
\]
which tends to $0$.
\end{proof}

\begin{proposition}[Complete multipartite graphs: exact evaluation and $3/4$ sharpness]
\label{prop:complete-multipartite}
Let
\[
G=K_{n_1,\dots,n_q}
\]
be the complete multipartite graph with part sizes $n_1,\dots,n_q$, and write
\[
n=\sum_{i=1}^q n_i.
\]
Then
\[
\ell(\mathcal A(G))
=
\sum_{1\le i<j<k\le q} n_i n_j n_k.
\]
Moreover, the spectral lower bound from Theorem~\ref{thm:spectral-lower} is
\[
\ell(\mathcal A(G))
\ge
\frac34\sum_{1\le i<j<k\le q} n_i n_j n_k.
\]
\end{proposition}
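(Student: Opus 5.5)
We prove the exact value by matching an explicit orientation against a triangle-counting lower bound, and then get the spectral statement by evaluating Theorem~\ref{thm:spectral-lower} in closed form. Write $V(G)=P_1\sqcup\cdots\sqcup P_q$ with $|P_i|=n_i$.

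\textbf{Upper bound.} Use Lemma~\ref{lem:orientation-frustration} with the ``layered'' orientation: orient every edge from the part of smaller index to the part of larger index. A vertex $v\in P_j$ then has
\[
d^-(v)=\sum_{i<j}n_i,\qquad d^+(v)=\sum_{k>j}n_k .
\]
Summing $d^+(v)d^-(v)$ over $v\in P_j$ and then over $j$ gives
\[
\sum_j n_j\Bigl(\sum_{i<j}n_i\Bigr)\Bigl(\sum_{k>j}n_k\Bigr)=\sum_{i<j<k}n_in_jn_k .
\]

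\textbf{Lower bound.} Use Proposition~\ref{prop:min-transit}, which counts directed $2$-paths. Every triple of vertices taken from three distinct parts spans a triangle of $G$, and there are exactly $\sum_{i<j<k}n_in_jn_k$ such triangles. In any orientation, each triangle contains at least one directed $2$-path formed by two of its own edges. If the triangle is cyclically oriented it contains three; if it is transitive, its middle vertex has one in-edge and one out-edge. Assign to each triangle one such $2$-path. A directed $2$-path determines its three vertices, so this assignment is injective. Hence every orientation has at least $\sum_{i<j<k}n_in_jn_k$ directed $2$-paths, which gives equality with the upper bound.

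\textbf{Spectral bound.} This is a direct evaluation of Theorem~\ref{thm:spectral-lower}, but it needs $\lambda_n(G)$, which I expect to be the only step requiring care. For $q\ge 2$ the complement $\overline G$ is the disjoint union of the cliques $K_{n_i}$, so it is disconnected. The standard complement relation gives the Laplacian eigenvalues of $G$ on $\mathbf 1^\perp$ as $n-\mu$, where $\mu$ ranges over the Laplacian eigenvalues of $\overline G$ on $\mathbf 1^\perp$. Since $\overline G$ is disconnected, $0$ occurs there, so $\lambda_n=n$. (Alternatively, $\lambda_n\le n$ always holds for simple graphs, and equality follows from the disconnected complement.)

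Write $p_2=\sum_i n_i^2$ and $p_3=\sum_i n_i^3$. Since $d(v)=n-n_i$ for $v\in P_i$, we have
\[
\sum_v d(v)^2=\sum_i n_i(n-n_i)^2=n^3-2np_2+p_3,
\qquad
|E(G)|=\tfrac12\,(n^2-p_2).
\]
Therefore
\[
\sum_v d(v)^2-|E(G)|\,\lambda_n=\tfrac12 n^3-\tfrac32 np_2+p_3 .
\]
Newton's identity gives $e_3=\sum_{i<j<k}n_in_jn_k=(n^3-3np_2+2p_3)/6$, so the right-hand side equals $3e_3$. Dividing by $4$, the spectral lower bound is exactly $\tfrac34\sum_{i<j<k}n_in_jn_k$, which is the claimed $3/4$-sharpness.
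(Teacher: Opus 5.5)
Your proof is correct and follows the paper's route. It uses the same layered orientation for the upper bound, the same one-directed-$2$-path-per-triangle lower bound, and the same closed-form evaluation of Theorem~\ref{thm:spectral-lower} via $\lambda_n=n$ and $\sum_{i<j<k}n_in_jn_k=(n^3-3np_2+2p_3)/6$. The differences are only refinements: you evaluate the upper bound directly as $\sum_j n_j\bigl(\sum_{i<j}n_i\bigr)\bigl(\sum_{k>j}n_k\bigr)$ rather than matching $2$-paths to triangles, you make the injectivity of the triangle-to-$2$-path assignment explicit, and you justify $\lambda_n=n$ through the disconnected complement, which the paper merely asserts.
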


\begin{proof}
By Lemma~\ref{lem:orientation-frustration},
\[
\ell(\mathcal A(G))
=
\min_{\vec G}\sum_{v\in V(G)} d^+_{\vec G}(v)d^-_{\vec G}(v),
\]
and the right-hand side counts directed $2$--paths.

Every triangle of $G$ is obtained by choosing one vertex from each of three
distinct parts. Hence
\[
\#\{\text{triangles of }G\}
=
\sum_{1\le i<j<k\le q} n_i n_j n_k.
\]
In any orientation of a triangle, there is at least one directed $2$--path.
Therefore every orientation of $G$ has at least one directed $2$--path per
triangle, so
\[
\ell(\mathcal A(G))
\ge
\sum_{1\le i<j<k\le q} n_i n_j n_k.
\]

For the reverse inequality, order the parts
\[
V_1,\dots,V_q
\]
and orient every edge from the earlier part to the later part.
Then every triangle is transitive and contributes exactly one directed
$2$--path.
At a vertex in part $V_j$, all incoming edges come from parts $V_i$ with $i<j$
and all outgoing edges go to parts $V_k$ with $k>j$, so every directed
$2$--path through that vertex has endpoints in two distinct parts and hence lies
in a unique triangle.
Thus the total number of directed $2$--paths is exactly the number of triangles,
and therefore
\[
\ell(\mathcal A(G))
=
\sum_{1\le i<j<k\le q} n_i n_j n_k.
\]

It remains to evaluate the spectral lower bound.
For a complete multipartite graph one has $\lambda_n(G)=n$.
Also, every vertex in part $i$ has degree $n-n_i$, so
\[
\sum_{v\in V(G)} d(v)^2=\sum_{i=1}^q n_i(n-n_i)^2,
\]
and
\[
|E(G)|=\frac{n^2-\sum_{i=1}^q n_i^2}{2}.
\]
Hence Theorem~\ref{thm:spectral-lower} gives
\[
\ell(\mathcal A(G))
\ge
\frac{1}{4}\left(\sum_{i=1}^q n_i(n-n_i)^2
-\frac{n}{2}\left(n^2-\sum_{i=1}^q n_i^2\right)\right).
\]
Expanding and simplifying yields
\[
\ell(\mathcal A(G))
\ge
\frac18\left(n^3-3n\sum_{i=1}^q n_i^2+2\sum_{i=1}^q n_i^3\right).
\]
On the other hand, the elementary symmetric identity
\[
\sum_{1\le i<j<k\le q} n_i n_j n_k
=
\frac16\left(n^3-3n\sum_{i=1}^q n_i^2+2\sum_{i=1}^q n_i^3\right)
\]
shows that the spectral lower bound equals
\[
\frac34\sum_{1\le i<j<k\le q} n_i n_j n_k
=
\frac34\,\ell(\mathcal A(G)).
\]
\end{proof}

\begin{remark}[Dense overlap versus sparse obstruction]
\label{rem:dense-sparse-regimes}
Propositions~\ref{prop:odd-cycles-spectral-fail} and
\ref{prop:complete-multipartite} isolate the two extreme regimes already
suggested by the computational evidence.
On sparse near-bipartite graphs such as odd cycles, the defect bound is the
correct obstruction and the spectral term may be very weak.
On dense graphs with high local edge overlap, exemplified by complete
multipartite graphs, the spectral term is substantially stronger.
This matches the interpretation of Proposition~\ref{prop:imbalance-identity}:
the spectral lower bound is strongest when the incidence Gram matrix
$D^{\mathsf T}D$ records many overlapping local $2$--path configurations.
\end{remark}

\begin{remark}[Position in the literature]
\label{rem:spectral-literature}
Spectral approaches to non-bipartiteness are classical, but they typically take a
different form from Corollary~\ref{cor:cubic-spectral-oct}.
For example, the largest eigenvalue of the normalized Laplacian and related
bipartiteness-ratio inequalities have been studied by Bauer--Jost
\cite{BauerJost2013} and in Trevisan's spectral Max-Cut work
\cite{Trevisan2012}, while Fallat and Fan \cite{FallatFan2012} relate the least
signless-Laplacian eigenvalue to vertex and edge bipartiteness parameters.
By contrast, Corollary~\ref{cor:cubic-spectral-oct} is a lower bound on odd
cycle transversal in terms of the largest \emph{ordinary} Laplacian eigenvalue,
and it arises through the antisymmetric-line-graph identity
\[
\ell(\mathcal A(G))=2\,\mathrm{oct}(G)
\]
in the cubic case.
We are not aware of this particular route or bound appearing previously.
\end{remark}

\section{Discussion and outlook}
\label{sec:outlook}

We have studied the antisymmetric line graph $\mathcal{A}(G)$ as a canonical
signed graph attached to an ordinary graph, with an emphasis different from the
existing signed-spectral literature. The signed graph with adjacency
\[
A_{\mathcal A(G)}=D^{\mathsf T}D-2I
\]
is classical as the signed or spectral line graph of an oriented signed graph
\cite{GerminaHameedZaslavsky2011,Zaslavsky2010Matrices,BelardoPisanskiStanicZaslavsky2023}.
What is specific to the present paper is the use of this canonical switching
class as a source of invariants of the underlying unsigned graph.

From that perspective, the strongest results obtained here are the
frustration-theoretic ones. We proved the orientation formula for
$\ell(\mathcal A(G))$, the comparison with Max-Cut defect, the exact cubic
identity
\[
\ell(\mathcal A(G))=2\,\mathrm{oct}(G),
\]
the Boolean edge-space Laplacian reformulation of $\ell(\mathcal A(G))$, the
resulting spectral lower bound, and the exact complete multipartite evaluation
with $3/4$-sharpness. These results appear to be new and show that the canonical
signed line graph can be used to extract nontrivial combinatorial information
about the original unsigned graph.

A second theme, recorded here more compactly, is that the same matrix identity
\[
A_{\mathcal A(G)}+2I=D^{\mathsf T}D
\]
places the Laplacian on edge space and exposes the cut/cycle decomposition and
the distinguished $(-2)$-sector. This viewpoint is classical at the operator
level, but it remains useful in the present context because it explains why the
frustration index of $\mathcal A(G)$ admits an exact Boolean-Laplacian
formulation.

Natural next questions include a fuller structural characterization of those
signed graphs that arise as $\mathcal A(G)$, sharper spectral relaxations for
frustration-type parameters, and further development of the lift-theoretic
framework in which $\mathcal A(G)$ appears as the antisymmetric sector of
$\mathrm{HL}'_2(G)$ \cite{BalHL}.

\section*{Statements and Declarations}
\textbf{Funding.} No funding was received for this work.

\textbf{Competing Interests.} The author has no competing interests to declare.

\vspace{2em}
\noindent\textbf{Author address:} \\
Hartosh Singh Bal \\
The Caravan, Jhandewalan Extn., New Delhi 110055, India \\
\texttt{hartoshbal@gmail.com}

\end{document}